\documentclass[12pt]{amsart}

\usepackage[utf8]{inputenc}
\usepackage[french]{babel}
\usepackage{hyperref}
\usepackage{comment}

\usepackage{amssymb,amsthm,amsmath,mathrsfs,amssymb}
\usepackage{euler}

\DeclareOption{proof}{%
  \setlength{\marginparwidth}{0.6in}%
  \def\?[#1]{\textbf{[#1]}\marginpar{\Large{\textbf{??}}}}%
}
\ProcessOptions\relax



\renewcommand{\leq}{\leqslant}
\renewcommand{\geq}{\geqslant}

\setlength{\textheight}{8.50in} \setlength{\oddsidemargin}{0.00in}
\setlength{\evensidemargin}{0.00in} \setlength{\textwidth}{6.08in}
\setlength{\topmargin}{0.00in} \setlength{\headheight}{0.18in}
\setlength{\marginparwidth}{1.0in}
\setlength{\abovedisplayskip}{0.2in}
\setlength{\belowdisplayskip}{0.2in}
\setlength{\parskip}{0.05in}

\makeatletter

\def\paragraph{\@startsection{paragraph}{4}%
  \z@\z@{-\fontdimen2\font}%
  {\normalfont\bfseries}}
\makeatother

\newcommand{\Z}{\mathbb{Z}}

\newcommand{\Hp}{\tilde{\mathscr{F}}}
\newcommand{\R}{\mathbb{R}}
\newcommand{\C}{\mathbb{C}}

\newcommand*\colvec[3][]{
    \begin{pmatrix}\ifx\relax#1\relax\else#1\\\fi#2\\#3\end{pmatrix}
}
\newcommand*\abs[1]{
    \left|#1\right|
}

\newcommand{\norm}[1]{\left\lVert#1\right\rVert}
\newtheorem{thm}{Théorème}
\newtheorem{lem}{Lemme}

\newtheorem{coro}{Corollaire}
\newtheorem{prop}{Proposition}

\theoremstyle{definition}
\newtheorem{exemple}{Exemple}
\newtheorem{rem}{Remarque}
\newtheorem{defn}{Définition}
\newtheorem{notation}{Notation}
\newtheorem*{fait}{Fait}
\title[Hypersurfaces Levi-plates linéaires dans les surfaces K3]{Presque toute surface K3 contient une infinité d'hypersurfaces Levi-plates linéaires}
\author{Félix Lequen}
\address{Laboratoire AGM -- CY Cergy Paris Universit\'e}
\email{felix.lequen@cyu.fr}
\usepackage{fullpage}
\begin{document}
\maketitle
\begin{abstract}
On s'intéresse à la construction d'hypersurfaces Levi-plates analytiques réelles dans les surfaces K3. On peut en construire dans les tores complexes de dimension $2$ en prenant des images d'hyperplans réels. On montre que « presque toute » surface K3 contient une infinité d'hypersurfaces Levi-plates de ce type. La preuve repose principalement sur une construction récente due à Koike-Uehara, ainsi que sur les idées de Verbitsky sur les structures complexes ergodiques et une adaptation d'un argument dû à Ghys dans le cadre de l'étude de la topologie des feuilles génériques.
\end{abstract}
\section{Introduction}
On s'intéresse ici à la construction d'hypersurfaces Levi-plates analytiques réelles. Une hypersurface Levi-plate dans une surface complexe est une hypersurface réelle $H$ telle que la distribution dite de Cauchy-Riemann $TH \cap iTH$ soit intégrable. De façon équivalente, localement, l'hypersurface est définie par $\mathrm{Im}\,w = 0$ dans un système de coordonnées locales holomorphes $(z, w)$. La classification des hypersurfaces Levi-plates dans différentes classes de surfaces complexes soulève de nombreuses questions. Par exemple, on ne sait pas si toute surface algébrique contient une hypersurface Levi-plate. On conjecture cependant qu'il n'existe pas d'hypersurface Levi-plate dans le plan projectif complexe $\mathbb{P}^2(\C)$ \cite{cerveau-minimaux} : cette conjecture, issue de la théorie des feuilletages holomorphes, est d'ailleurs l'une des motivations de la notion. Lins Neto \cite{lins-neto} a montré l'inexistence d'hypersurfaces Levi-plates analytiques réelles dans l'espace projectif complexe $\mathbb{P}^n(\C)$ ($n \geq 3$). Citons aussi par exemple l'article de survol \cite{ohsawa-survey}, et les articles \cite{barrett1992topology}, \cite{inaba1994real}, \cite{nemirovskii1999stein}, \cite{deroin2016topology}, \cite{canales} et \cite{adachi2020levi} pour diverses constructions et résultats d'inexistence. Notons en particulier \cite{ohsawa2006levi} qui construit des exemples différents de ceux que nous allons étudier dans les surfaces de Kummer.

Récemment, Koike et Uehara \cite{koike2019gluing} ont construit des exemples remarquables d'hypersurfaces Levi-plates dans des surfaces K3, à partir d'un procédé de type chirurgie, reposant sur un théorème de voisinage tubulaire en contexte holomorphe, remarqué par Arnol'd \cite{arnol1976bifurcations}, en analogie avec des résultats de linéarisation de germes de difféomorphismes holomorphes. Cette construction fournit des familles de surfaces K3 marquées avec 19 modules, qui sont naturellement paramétrées par leurs périodes. Ces hypersurfaces analytiques réelles sont en fait \emph{linéaires}, c'est-à-dire qu'elles sont isomorphes au quotient de $\C \times \R$ par un réseau de rang $3$, où la notion d'isomorphisme est celle d'isomorphisme CR analytique réel, que l'on définira précisément dans notre cas. Avec cette définition, un tore complexe de dimension $2$ contient de nombreuses hypersurfaces Levi-plates linéaires. En effet, dans le cas où l'on considère $\C^2/\Lambda$ où $\Lambda$ est un réseau, tout hyperplan de $\C^2$ dont l'équation est à coefficients rationnels par rapport à $\Lambda$ fournit un exemple.

Le but de ce texte est d'utiliser la construction de Koike-Uehara pour montrer que presque toute surface K3 — en un sens à préciser — contient, comme un tore complexe, de nombreuses hypersurfaces Levi-plates linéaires. Précisons cela. À une hypersurface Levi-plate linéaire d'une surface K3 $X$, on peut associer une classe dite \emph{classe d'Ahlfors}, associée au feuilletage donné par la distribution de Cauchy-Riemann. Ces classes définissent des éléments du groupe de cohomologie $H^{1, 1}(X, \R)$ de $X$, et celles-ci appartiennent au bord du cône associé à la forme d'intersection. Le résultat que l'on prouve est alors le suivant :

\begin{thm}\label{coro-ahlfors}Pour presque tout période $\xi$ dans l'espace des périodes des surfaces K3, et toute surface K3 marquée $(X, \phi)$ de période $\xi$, la surface $X$ contient une infinité d'hypersurfaces Levi-plates linéaires, telles que de plus les classes d'Ahlfors associées sont denses dans la quadrique $\{q = 0\} \cap \mathbb{P}(H^{1, 1}(X, \R))$ de $\mathbb{P}(H^{1, 1}(X, \R))$, où $q$ désigne la forme d'intersection de $X$.
\end{thm}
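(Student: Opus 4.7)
Je commencerais par reformuler la construction de \cite{koike2019gluing} comme fournissant un ensemble $D$ de classes admissibles, formé de classes $\alpha \in H^2(X, \R)$ isotropes (i.e.\ $q(\alpha) = 0$), défini par une condition diophantienne héritée du théorème de linéarisation d'Arnol'd \cite{arnol1976bifurcations}. Cette construction associe à chaque tel $\alpha$ et à chaque période $\xi \in \Omega$ vérifiant $q(\alpha, \xi) = 0$ (c'est-à-dire $\alpha \in H^{1,1}(X_\xi, \R)$) une hypersurface Levi-plate linéaire dans la surface K3 marquée $(X_\xi, \phi)$ de classe d'Ahlfors proportionnelle à $\alpha$. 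Un point essentiel à établir est que la projection de $D$ dans la quadrique projective $\{q = 0\} \cap \mathbb{P}(H^2(X, \R))$ est de mesure de Lebesgue positive, ce que garantit en principe le caractère générique de la condition diophantienne.

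J'invoquerais ensuite l'ergodicité à la Verbitsky. Le groupe modulaire $\Gamma$ de K3 est un réseau arithmétique dans $O^+(3, 19)$, et agit à la fois sur $\Omega$ et sur la quadrique projective de $H^2(X, \R)$. Par le théorème de Howe-Moore et la densité de Borel, cette action est ergodique et minimale sur la quadrique projective. Il en résulte que $\Gamma \cdot D$, qui est $\Gamma$-invariant et de mesure positive, est en fait de mesure pleine dans la quadrique projective. Chaque classe $\alpha \in \Gamma \cdot D$ fournit alors une hypersurface Levi-plate dans $X_\xi$ dès que $\alpha \in H^{1,1}(X_\xi, \R)$ : on choisit $\gamma \in \Gamma$ avec $\gamma^{\ast} \alpha \in D$, on applique la construction de Koike-Uehara à la période $\gamma \cdot \xi$, puis on transporte l'hypersurface obtenue via le biholomorphisme $X_{\gamma \xi} \cong X_\xi$ induit par $\gamma$.

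Le cœur technique est alors un argument de type Fubini appliqué à la variété d'incidence
\[
\mathcal{I} = \{(\xi, \alpha) \in \Omega \times (\Gamma \cdot D) : q(\alpha, \xi) = 0\},
\]
dont les fibres au-dessus d'une classe $\alpha$ sont les $\Omega_\alpha = \{\xi : q(\alpha, \xi) = 0\}$, de codimension complexe $1$, et dont les fibres au-dessus d'une période $\xi$ sont $\Gamma \cdot D \cap H^{1,1}(X_\xi, \R) \cap \{q = 0\}$. La pleine mesure de $\Gamma \cdot D$ dans la quadrique projective, conjuguée à Fubini, assure que pour presque toute $\xi \in \Omega$ cette dernière fibre est de mesure pleine dans le cône isotrope de $H^{1,1}(X_\xi, \R)$, et donc projette en un sous-ensemble dense de la quadrique projective $\{q = 0\} \cap \mathbb{P}(H^{1,1}(X_\xi, \R))$. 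On obtient ainsi une infinité d'hypersurfaces Levi-plates dans $X_\xi$ (deux classes $\alpha, \alpha'$ non colinéaires donnant des hypersurfaces distinctes) avec la densité voulue des classes d'Ahlfors.

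Le point le plus délicat me semble être la première étape : extraire de \cite{koike2019gluing} non seulement l'existence mais la positivité en mesure de l'ensemble $D$ de classes admissibles --- faute de quoi $\Gamma \cdot D$ resterait de mesure nulle et l'argument de Fubini s'effondrerait. L'adaptation de l'argument de Ghys mentionnée dans le résumé interviendrait vraisemblablement dans la troisième étape, pour promouvoir la conclusion de « pleine mesure » en « densité topologique » dans la quadrique projective, en transférant la minimalité de l'action de $\Gamma$ aux fibres $\mathcal{I}_\xi$.
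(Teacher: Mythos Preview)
Your first step overstates what Koike--Uehara gives: for a fixed Diophantine class $\alpha = v_{(p,q)}$, the construction does \emph{not} produce a Levi-flat hypersurface in $X_\xi$ for every period $\xi$ with $q(\alpha,\xi)=0$, but only for $\xi$ lying in an open subset $\Xi_{(p,q)} \subsetneq \mathscr{D}\cap\alpha^\perp$ (governed by the Arnol'd linearisation radius, which depends on $\xi$). Hence in your second step, knowing $\gamma^*\alpha\in D$ and $\gamma\xi\perp\gamma^*\alpha$ is not enough to apply the construction at $\gamma\xi$; you would also need $\gamma\xi\in\Xi$. As written, full measure of $\Gamma\cdot D$ in the quadric therefore does not feed your Fubini argument: for a given $\xi$, most classes $\alpha\in(\Gamma\cdot D)\cap H^{1,1}(X_\xi,\R)$ fail to come with a Levi-flat, and the fibre of your $\mathcal{I}$ over $\xi$ is not the relevant set.

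The paper proceeds differently. It takes the positive-measure Koike--Uehara set $B$ in the \emph{period domain} $\mathscr{D}$ (not in the quadric), builds the equivariant assignment $\xi\mapsto\mathscr{A}(\xi)\subset\partial\tilde{\mathscr{F}}_\xi$, where $\tilde{\mathscr{F}}_\xi\cong\mathbb{H}^{19}$ has boundary the isotropic quadric of $H^{1,1}(X_\xi,\R)$, and then uses the Ghys idea not to upgrade measure to topology but to pass from ``nonempty on a full-measure set of $\xi$'' to ``dense in $\partial\tilde{\mathscr{F}}_\xi$ for almost every $\xi$'': ergodicity of the geodesic flow on $\Gamma\backslash\tilde{N}$, with $\tilde{N}\cong SO^\circ(3,19)/(SO(2)\times SO(18))$, forces almost every leafwise geodesic to revisit an $r$-neighbourhood of $\mathrm{Conv}(\overline{\mathscr{A}(\xi)})$, which pins its endpoint at infinity inside $\overline{\mathscr{A}(\xi)}$. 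Your route can in fact be salvaged, but by applying Howe--Moore directly to the incidence variety $\{(\xi,[\alpha]):\alpha\perp P(\xi),\ q(\alpha)=0\}$, which is itself a $SO^\circ(3,19)$-homogeneous space with non-compact stabiliser; the Koike--Uehara locus has positive measure \emph{there}, its $\Gamma$-saturation is full, and Fubini over $\xi$ then concludes.
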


On précisera les définitions, et notamment le sens exact de « presque toute période » à la section \ref{section:infinite-hlpl}. Le point de départ de la preuve est que la construction de Koike-Uehara fournit un ensemble de mesure positive dans l'espace des périodes des surfaces K3,  comme on le verra, ce qui permet d'appliquer des idées dues à Verbitsky \cite{verbitsky2015ergodic} sur les structures complexes ergodiques. L'espaces des périodes s'identifie en effet à un espace homogène du groupe de Lie $SO(3, 19)$, sur lequel agit un groupe modulaire qui s'identifie à l'action par translation d'un réseau de $SO(3, 19)$. Contrairement au cas des courbes où le groupe modulaire agit de façon proprement discontinue sur l'espace de Teichmüller, qui est ici l'analogue de l'espace des périodes, l'action est ergodique, par le théorème de Howe-Moore \cite{zimmer2013ergodic} un résultat de dynamique homogène que nous aurons besoin de raffiner.

Enfin, pour étudier la densité des classes d'Ahlfors, on remarque que le cône donné par la forme d'intersection de $X$ s'identifie au bord de l'espace hyperbolique de dimension 19, et on utilise des arguments de géométrie hyperbolique, adaptés d'une idée de Ghys \cite{ghys1995topologie}.

Comme conséquence, on obtient un résultat s'énonçant sans théorie de la mesure, que l'on déduit du théorème \ref{coro-ahlfors} et de la surjectivité de l'application des périodes \ref{surjectivite-periode}, comme dans la section 5 de \cite{koike2019gluing} :
\begin{coro}
	Il existe une surface K3 $X$ qui n'est ni projective ni une surface de Kummer et qui contient une infinité d'hypersurfaces Levi-plates linéaires, telles que de plus les classes d'Ahlfors associées sont denses dans la quadrique $\{q = 0\} \cap \mathbb{P}(H^{1, 1}(X, \R))$ de $\mathbb{P}(H^{1, 1}(X, \R))$, où $q$ désigne la forme d'intersection de $X$.
\end{coro}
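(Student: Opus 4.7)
Le plan est le suivant. Le théorème \ref{coro-ahlfors} fournit un ensemble $A$ de mesure pleine dans l'espace des périodes des surfaces K3, tel que toute K3 marquée $(X, \phi)$ de période dans $A$ contient une infinité d'hypersurfaces Levi-plates linéaires, avec classes d'Ahlfors denses dans $\{q = 0\} \cap \mathbb{P}(H^{1,1}(X, \R))$. Pour conclure, il suffit d'exhiber une période $\xi \in A$ qui ne soit ni une période de K3 projective ni une période de K3 de Kummer, puis d'appliquer la surjectivité de l'application des périodes \ref{surjectivite-periode} pour réaliser $\xi$ comme période d'une K3 marquée $(X, \phi)$, dont les propriétés héritées de $\xi$ seront précisément celles voulues.

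Premièrement, je montrerais que l'ensemble $P$ des périodes correspondant à des K3 projectives est négligeable. En effet, par le critère usuel, $X$ est projective si et seulement si $H^{1,1}(X, \R)$ contient une classe entière de carré strictement positif ; via le marquage, cela équivaut à l'existence d'un vecteur $\alpha$ du réseau K3 de référence, de carré strictement positif, tel que $\xi$ appartienne à l'hyperplan projectif $\alpha^\perp$. L'ensemble $P$ est ainsi une réunion dénombrable de sous-ensembles analytiques stricts de l'espace des périodes, donc de mesure nulle pour la mesure considérée.

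Deuxièmement, je montrerais que l'ensemble $K$ des périodes correspondant à des K3 de Kummer est également négligeable. Ces surfaces sont paramétrées, via la construction de Kummer, par les tores complexes de dimension $2$, lesquels forment une famille de dimension strictement inférieure à celle de l'espace des périodes des K3. On obtient donc une sous-variété lisse de dimension strictement plus petite que $\dim \Omega$, dont la mesure est nulle.

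L'ensemble $A \setminus (P \cup K)$ est alors de mesure pleine, en particulier non vide ; on y choisit une période $\xi$, et la surjectivité de l'application des périodes (\ref{surjectivite-periode}) fournit une K3 marquée $(X, \phi)$ de période $\xi$ : par construction $X$ n'est ni projective ni de Kummer, et comme $\xi \in A$, elle contient une infinité d'hypersurfaces Levi-plates linéaires à classes d'Ahlfors denses dans la quadrique voulue.

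Le point le plus délicat sera sans doute de décrire rigoureusement l'image des périodes de Kummer comme sous-ensemble de dimension strictement inférieure, et de justifier la nullité de sa mesure pour la classe de mesure utilisée à la section \ref{section:infinite-hlpl} — l'argument pour le cas projectif, qui repose simplement sur une énumération dénombrable d'hyperplans, étant plus direct.
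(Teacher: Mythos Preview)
Your proposal is correct and follows exactly the route the paper indicates: the paper does not give a detailed proof but simply says the corollary is deduced from Theorem~\ref{coro-ahlfors} together with the surjectivity of the period map~\ref{surjectivite-periode}, \og comme dans la section~5 de \cite{koike2019gluing}\fg{}, which is precisely the argument you sketch (negligibility of the projective locus via a countable union of hyperplane sections, and of the Kummer locus by a dimension count). The only imprecision is calling the Kummer locus a single smooth subvariety---it is rather a countable union of lower-dimensional images under the various markings---but you already flag this as the point needing care, and the measure-zero conclusion is unaffected.
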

À notre connaissance, on ne sait pas s'il existe une surface K3 qui ne contient pas d'hypersurface Levi-plate linéaire analytique réelle.

\paragraph{Organisation de l'article. } La section \ref{section:debut} donne des notations et des définitions sur les hypersurfaces Levi-plates linéaires, puis reproduit brièvement la construction de Koike-Uehara. La section \ref{section:infinite-hlpl} montre que presque toute surface K3 contient une infinité d'hypersurfaces Levi-plate linéaire, ce qui est nécessaire pour la preuve de la proposition \ref{coro-ahlfors}, qui est donnée dans le section \ref{section:ahlfors}.
\paragraph{Remerciements. } Je remercie vivement mon directeur de thèse, Bertrand Deroin, pour m'avoir proposé cette approche et guidé tout au long de sa mise en œuvre. Entre autres choses, l'argument permettant de montrer la densité des classes d'Ahlfors lui est dû.
\section{La construction de Koike-Uehara}
\label{section:debut}
Dans cette section, on commence par quelques rappels, notations, et définitions nécessaires à la suite, puis on reproduit brièvement la construction de Koike et Uehara.
\paragraph{Fibrés plats sur une courbe elliptique. }
Soit $C$ une courbe elliptique lisse. Notons $U(1) := \{z\in \C\,:\,\abs{z} = 1\}$ le cercle unité vu comme groupe. Étant donné une représentation $\rho \colon \pi_1(C) \to U(1)$ et $V \subset \C$ un sous-ensemble du plan complexe invariant par l'action de $U(1)$ par multiplication, soit $C \times_\rho V$ le quotient de $\tilde{C} \times V$ par l'action de $\pi_1(C)$ agissant diagonalement via $\rho$ : pour tout $\gamma \in \pi_1(C)$, l'action est donnée par
\[\gamma \cdot (\zeta, v) := (\gamma \cdot \zeta, \rho(\gamma)v).\]
Si $V \subset W$, on identifiera $C \times_\rho V$ à un sous-ensemble de $C \times_\rho W$. Il sera utile par la suite de poser les notations suivantes, pour $R > 0, r^+ > r^- > 0$ :
\[\mathbb{S}^1(R) = \{v\in\C\;:\;|v| = R\} \text{ et } \mathbb{S}^1 := \mathbb{S}^1(1),\]
\[\mathbb{D}(R) = \{v\in\C\;:\;|v| < R\},\]
\[\mathbb{A}(r_{-}, r_{+}) = \{v\in\C\;:\;r_{-}<|v|<r_{+}\}.\]

La projection $(\zeta, v) \mapsto \zeta$ induit une application $C \times_\rho V \to C$ qui munit $C \times_\rho V$ d'une structure de fibré, de fibre $V$. De plus, le fibré $C \times_\rho \C$ au-dessus de $C$ est naturellement un fibré en droites holomorphe. Il est connu que tous les fibrés en droites holomorphes topologiquement triviaux sur une courbe complexe lisse sont de la forme précédente et de plus $C \times_\rho \C$ et $C \times_{\rho'} \C$ sont isomorphes en tant que fibrés en droites holomorphes si et seulement si $\rho = \rho'$.
On notera dans la suite $L_\rho$ le fibré en droites holomorphe $C \times_\rho \C$.

\paragraph{Théorème de linéarisation d'Arnol'd. } On donne ici l'énoncé du théorème de linéarisation d'Arnol'd \cite{arnol1976bifurcations}, nécessaire à la construction de Koike-Uehara. C'est un équivalent holomorphe du théorème du voisinage tubulaire classique dans le cadre différentiel, nécessaire pour un procédé de chirurgie. Il ne s'applique cependant pas dans tous les cas : on a besoin d'une hypothèse diophantienne, qui est vraie seulement génériquement.
\begin{defn}
Soit $C, \alpha > 0$. On note
\[\mathrm{Dioph}(C, \alpha) = \left\{(p, q) \in \R^2\,:\,\norm{np}_{\R/\Z} + \norm{nq}_{\R/\Z} > Cn^{-\alpha} \text{ pour tout entier } n \geq 1\right\}\]
et
\[\mathrm{Dioph} = \bigcup_{C, \alpha > 0} \mathrm{Dioph}(C, \alpha).\]
\end{defn}
Ici, on a noté $\norm{x}_{\R/\Z}$ la distance à $\Z$ d'un réel $x$. Il n'est pas difficile de voir que $\mathrm{Dioph}$ est de mesure pleine.
\begin{defn}
On dit qu'une représentation $\rho\colon L\to U(1)$ d'un $\Z$-module $L \simeq \Z^2$ est \emph{diophantienne} s'il existe $(p, q) \in \mathrm{Dioph}$ et $(\tau_1, \tau_2)$ une base de $L$ telle que $\rho(\tau_1) = e^{2i\pi p}$, $\rho(\tau_2) = e^{2i\pi q}$.
\end{defn}

\begin{thm}[Arnol'd \cite{arnol1976bifurcations}]
Soit $S$ une surface complexe et $C \subset S$ une courbe elliptique compacte lisse. Supposons que le fibré normal de $C$ dans $S$ soit isomorphe à $L_\rho$ pour $\rho$ une représentation diophantienne. Alors la courbe elliptique $C$ est \emph{linéarisable} : il existe un biholomorphisme d'un voisinage ouvert de $C$ dans $S$ dans un voisinage ouvert de la section nulle dans $L_\rho$ qui envoie $C$ sur la section nulle.
\end{thm}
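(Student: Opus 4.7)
L'approche que je proposerais adapte, dans le cadre holomorphe, la démonstration classique du théorème de voisinage tubulaire différentiel, en résolvant ordre par ordre les obstructions à linéariser, avec des estimations analytiques fournies par la condition diophantienne. Après avoir choisi un recouvrement ouvert de $C$ par des cartes $(U_i, (z_i, w_i))$ dans lesquelles $C = \{w_i = 0\}$, les changements de cartes sont, au premier ordre en $w$, de la forme $w_j = g_{ij}(z_j) w_i$, où le cocycle $(g_{ij})$ décrit le fibré normal $L_\rho$ ; il s'agit alors de construire, par compositions successives, des changements de coordonnées tangents à l'identité qui annulent les corrections d'ordre supérieur.

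Je procéderais inductivement sur l'ordre $k \geq 2$. À chaque étape, l'obstruction à tuer la partie homogène de degré $k$ en $w$ vit dans un groupe de cohomologie de la forme $H^1(C, L_\rho^{\otimes m_k})$ avec $|m_k|$ linéaire en $k$, groupe qui s'annule dès que $\rho^{m_k}$ est non triviale, ce qui est bien le cas ici puisque la condition diophantienne implique en particulier que $p$ et $q$ sont irrationnels. Plus précisément, après passage au revêtement universel de $C$ et analyse de Fourier, la résolution de l'équation cohomologique fait apparaître des petits dénominateurs essentiellement de la forme $|1 - \rho(\gamma)^{m_k}|$ pour $\gamma$ parcourant les générateurs de $\pi_1(C)$, minorés, grâce à l'hypothèse $(p, q) \in \mathrm{Dioph}(C, \alpha)$ appliquée à l'entier $m_k$, par une quantité polynomialement petite en $k$. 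On obtient ainsi à chaque étape une solution de l'équation cohomologique, avec une perte polynomiale en $k$ du rayon de convergence.

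Le principal obstacle est la convergence de la composition infinie de ces changements de coordonnées : la perte polynomiale à chaque étape interdit une itération naïve. Je suivrais alors la stratégie à la KAM, qui consiste à grouper les ordres en blocs de taille doublant à chaque itération et à appliquer un schéma de type Newton, dont la convergence quadratique compense la perte polynomiale par étape (c'est d'ailleurs à ce point précis que la condition diophantienne, nécessaire et pas simplement l'irrationalité, intervient de manière décisive). À la limite, on obtient un biholomorphisme d'un voisinage non vide, quoique possiblement petit, de $C$ dans $S$ sur un voisinage de la section nulle dans $L_\rho$, qui envoie $C$ sur la section nulle, comme voulu.
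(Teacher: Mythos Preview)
Le papier ne démontre pas ce théorème : il l'énonce comme un résultat dû à Arnol'd et l'utilise tel quel comme ingrédient de la construction de Koike--Uehara. Il n'y a donc pas de preuve interne à laquelle comparer ta proposition.

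Ton esquisse suit néanmoins l'approche standard et est correcte dans ses grandes lignes. Deux remarques mineures. D'une part, il faut aussi linéariser les changements de cartes en la variable de base $z$, ce qui fait intervenir $H^1(C, TC \otimes N^{-k})$ ; comme $TC$ est trivial sur une courbe elliptique, cela se ramène à $H^1(C, L_\rho^{-k})$ et ne modifie pas l'analyse. D'autre part, pour la convergence tu invoques un schéma de type KAM à convergence quadratique ; cela fonctionne, mais la preuve originale d'Arnol'd --- et c'est aussi ce à quoi l'annexe du papier fait allusion via la citation de Siegel --- procède plutôt par séries majorantes directes à la Siegel, sans itération de Newton : on écrit la solution formelle d'un seul coup et on borne ses coefficients par une série majorante explicite, la perte polynomiale en $k$ étant absorbée par la décroissance géométrique. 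Les deux méthodes aboutissent au même énoncé.
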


\paragraph{Hypersurfaces Levi-plates. }
Soit $X$ une variété complexe.
\begin{defn}
Une hypersurface $H$ de $X$ est une \emph{hypersurface Levi-plate} (analytique réelle) si elle admet un feuilletage (analytique réel) de codimension réelle un dont les feuilles sont des sous-variétés complexes immergées de $X$. Ce feuilletage est appelé le \emph{feuilletage de Cauchy-Riemann} de $H$.
\end{defn}
\begin{rem}
Sauf mention du contraire, toutes les hypersurfaces Levi-plates sont supposées analytiques réelles dans la suite.
\end{rem}
\begin{exemple}
\begin{enumerate}
\item L'hypersurface $\C^n\times\R \subset \C^{n + 1}$ est naturellement une hypersurface Levi-plate, pour tout entier $n$.
\item Soit $V \subset \C^2$ un hyperplan réel, et $L$ un réseau de $\C^2$ de rang $4$ tel que $V$ soit rationnel par rapport à $L$, c'est-à-dire que $V$ admet une base de vecteurs contenus dans $L$. Alors l'image de $V$ dans $\C^2/L$ est naturellement une hypersurface Levi-plate, difféomorphe à un tore de dimension $3$ réelle.
\end{enumerate}
\end{exemple}
\begin{defn} \begin{enumerate}
\item Une hypersurface Levi-plate (analytique réelle) $H \subset X$ est dite \emph{Levi-plate linéaire} (HLPL) si elle est CR-isomorphe (de façon analytique réelle) au quotient de $\C\times\R$ par un réseau de rang $3$ (voir le livre \cite{CRStructures} pour plus de détails). Cela signifie qu'il existe un difféomorphisme (analytique réel) $\phi$ de $H$ vers $\R^3/\Z^3$ tel que
\begin{enumerate}
\item le feuilletage de Cauchy-Riemann de $H$ est envoyé sur un feuilletage linéaire de $\R^2/\Z^3$, c'est-à-dire dont les feuilles sont les images de $u + V$ lorsque $u \in\R^3$ varie, pour un hyperplan fixé $V$ de $\R^3$, 
\item il existe une structure complexe $J$ sur $V$ (c'est-à-dire un endomorphisme $J$ de $V$ tel que $J^2 = -\mathrm{id}_V$) tel que pour toute feuille $F$ du feuilletage de Cauchy-Riemann de $H$, on a $d\phi_{|F} \circ J' = J \circ d\phi_{|F}$, où $J'$ désigne la structure complexe de $X$ (on a identifié les espaces tangents à $\phi(F)$ à $V$).
\end{enumerate}

De façon équivalente, $H$ peut être obtenu comme dans l'exemple précédent : $H$ est une HLPL s'il existe
\begin{enumerate}
\item un hyperplan réel $V$ de $\C^2$ ;
\item un réseau $L$ de rang 4 de $\C^2$ tel que $V$ soit rationnel par rapport à $L$ ;
\item un isomorphisme analytique réel $\phi\colon H \to H'$ où $H'$ est l'image de $V$ dans $\C^2/L$
\end{enumerate}tels que $\phi$ s'étend en une application biholomorphe d'un voisinage de $H$ dans un voisinage de $H'$. L'équivalence des définitions découle de l'analyticité réelle des objets considérés, qui permet en quelque sorte de reconstruire la structure complexe au voisinage de $H$.
\item Une hypersurface Levi-plate linéaire est dite \emph{irrationnelle} (HLPLI) si les feuilles de son feuilletage de Cauchy-Riemann sont denses.
\end{enumerate}
\end{defn}

L'exemple suivant est important pour la suite.
\begin{exemple}
\label{hlpl-elliptique}
Soit $C = \C/(\Z\omega_1 + \Z\omega_2)$ une courbe elliptique, où $\omega_1$ et $\omega_2$ engendrent un réseau de $\C$. Soit $\rho \colon \pi_1(C) \to U(1)$ une représentation. Considérons $L = C \times_\rho \mathbb{S}^1$ où $\mathbb{S}^1 := \mathbb{S}^1(1)$. Montrons que c'est une hypersurface Levi-plate linéaire de $C \times_\rho \mathbb{A}(e^{-\pi}, e^{\pi})$.

Soit $p, q \in \R$ tels que $e^{2i\pi p} = \rho(\omega_1)$ et $e^{2i\pi q} = \rho(\omega_2)$. Considérons $L'$ le réseau de $\C^2$ engendré par $(\omega_1, p)$, $(\omega_2, q)$, $(0, 1)$ et un autre vecteur pour que ce soit bien un réseau de rang $2$, disons $(0, i)$ pour la suite. 
Alors l'application $(\zeta, v) \mapsto (\zeta, e^{2i\pi v})$ passe au quotient en un biholomorphisme de $\{[(\zeta, v)]\,:\,\abs{\mathrm{Im}\,v} < \frac{1}{2}\}\subset\C^2/L'$ dans $C \times_\rho \mathbb{A}(e^{-\pi}, e^{\pi})$ qui envoie $\{\mathrm{Im}\,v = 0\}$ sur $C\times_\rho \mathbb{S}^1$. C'est donc bien une hypersurface Levi-plate linéaire. Plus généralement, à partir de toute surface complexe contenant une courbe elliptique linéarisable, on peut construire ainsi une hypersurface Levi-plate linéaire analytique réelle de cette façon.
\end{exemple}

\paragraph{La construction de Koike-Uehara. } On rappelle ici la construction de Koike-Uehara \cite[section 2]{koike2019gluing}.
On considère $C_0^{\pm} \subset \mathbb{P}^2(\C)$ deux cubiques planes lisses et $p_1^{\pm}, p_2^{\pm}, \ldots, p_9^{\pm} \in C_0$. Soient $S^{\pm}$ les éclatés de $\mathbb{P}^2(\C)$ en ces $9$ points ; on note $E_1^{\pm}, E_2^{\pm}, \ldots, E_9^{\pm}$ les diviseurs exceptionnels associés et $C^{\pm}$ les relevées strictes de $C_0^{\pm}$ dans $S^{\pm}$. 

Les diviseurs canoniques $K_{S^{\pm}}$ de $S^{\pm}$ vérifient $K_{S^{\pm}} = \left(\sigma^{\pm}\right)^{*}(-3H) + E_1 + \ldots + E_9$ où $H$ désigne une section hyperplane dans $\mathbb{P}^2(\C)$ et $\sigma^{\pm}$ sont les applications d'éclatement. Notons d'autre part que $C^{\pm} = \left(\sigma_{\pm}\right)^{*}C_0^{\pm} - (E_1^{\pm} + \ldots + E_9^{\pm}) = \left(\sigma^{\pm}\right)^{*}(3H)-(E_1^{\pm} + \ldots + E_9^{\pm})$, donc $K_{S^{\pm}} = -C^{\pm}$. Enfin, par la formule d'adjonction, on a $0 = K_{C^{\pm}} = (K_{S^\pm})_{|C^{\pm}} + N^{\pm}$ où $N^{\pm}$ désigne le diviseur du fibré normal de $C^{\pm}$ dans $S^{\pm}$, donc via l'isomorphisme $C^{\pm} \to C_0^{\pm}$, le diviseur de $N^{\pm}$ est $3H_{|C_0^{\pm}}- (p_1^{\pm} + \ldots + p_9^{\pm})$. 

Notons que le degré de $N^{\pm}$ est nul ; ce sont donc des fibrés plats. Par la section précédente, ils sont définis par des représentations $\pi_1(C^\pm) \to U(1)$.
Si les fibrés normaux $N^{\pm}$ aux courbes $C^\pm$ dans $S^\pm$ sont donnés par des représentations diophantiennes $\rho^\pm$, on peut appliquer le théorème de linéarisation d'Arnol'd. On en déduit deux voisinages $V^\pm \subset S^\pm$ isomorphes à $C^\pm\times_{\rho^\pm} \mathbb{D}(r^\pm)$ par des biholomorphismes qui envoient $C^\pm$ sur $C^{\pm} \times \{0\}$. On identifie par la suite $V^\pm$ à $C^\pm\times_{\rho^\pm} \mathbb{D}(r^\pm)$.

Supposons maintenant que l'on dispose d'un biholomorphisme $g \colon C^{+} \to C^{-}$ tel que $g^{*}N^{-} \cong \left(N^{+}\right)^{-1}$. Alors on a $g^*\rho^{-} = \frac{1}{\rho^{+}}$, où par définition $(g^{*}\rho^{-})(\gamma) = \rho^{-}(g_{*}(\gamma))$ pour tout $\gamma \in \pi_1(C^+)$ et $g_* \colon \pi_1(C^+) \to \pi_1(C^-)$ est l'application induite sur le groupe fondamental.

Considérons alors l'application $\tilde{g} \colon \tilde{C^+} \to \tilde{C^-}$ induite en passant au revêtement universel. Posons $V^+ := C^+\times_{\rho^+} \mathbb{A}(|t|/r^-, r^+)$ et $V^- := C^-\times_{\rho^-} \mathbb{A}(|t|/r^+, r^-)$ et définissons $F \colon V^+ \to V^-$ induite par\[
(\zeta, v) \mapsto (\tilde{g}(\zeta), t/v),\]où $t\in\C$ vérifie $0 < |t| < r^{-}r^{+}$.

L'application $F$ permet de recoller $M^+ := S^+\setminus(C^+\times_{\rho^{+}}\mathbb{D}(0, |t|/r^-))$ à $M^- := S^-\setminus(C^-\times_{\rho^{-}}\mathbb{D}(0, |t|/r^+))$ le long de $V^+$ et $V^-$, ce qui permet de définir une nouvelle variété complexe notée $X$, dans laquelle $M^+$ et $M^-$ sont des ouverts. Les ouverts $V^+$ et $V^-$ sont alors identifiés à un ouvert $V \subset X$. Enfin, notons que $V^\pm$ est remplie par des hypersurfaces Levi-plates, à savoir les hypersurfaces $C^+\times_{\rho^+} \mathbb{S}^1(R)$ pour $|t|/r^- < R < r^+$. Elles sont irrationnelles car les représentations sont diophantiennes.

\begin{prop}La surface complexe $X$ est une surface K3, et il existe une $2$-forme holomorphe sur $X$ notée $\sigma$ telle que, dans les coordonnées données par l'identification $V \simeq V^+ = C^+\times_{\rho^+} \mathbb{A}(|t|/r^-, r^+)$, on ait
\[\sigma_{|V^+} = d\zeta \wedge \frac{dv}{v}.\]
\end{prop}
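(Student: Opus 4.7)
Le plan est de démontrer que $X$ est une surface complexe compacte, simplement connexe, et à fibré canonique trivial ; c'est alors par définition une surface K3, et la forme $\sigma$ trivialisera $K_X$ tout en fournissant l'expression locale voulue. Comme $K_{S^{\pm}} = -C^{\pm}$, on a $H^0(S^{\pm}, \mathcal{O}(K_{S^{\pm}} + C^{\pm})) = H^0(S^{\pm}, \mathcal{O}_{S^{\pm}}) = \C$, d'où l'existence à scalaire près d'une unique $2$-forme méromorphe $\sigma^{\pm}$ sur $S^{\pm}$ ayant un pôle simple le long de $C^{\pm}$ et aucun autre pôle. En particulier, $\sigma^{\pm}$ est holomorphe et partout non-nulle sur $M^{\pm}$. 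Son résidu le long de $C^{\pm}$ est une $1$-forme holomorphe sur la courbe elliptique, donc un multiple constant de $d\zeta$, et quitte à renormaliser on le suppose égal à $d\zeta$. Dans les coordonnées de la linéarisation, $\sigma^{\pm}$ s'écrit alors $(1 + v g^{\pm}(\zeta, v))\,\frac{d\zeta \wedge dv}{v}$ pour un $g^{\pm}$ holomorphe sur $V^{\pm}$.

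L'étape technique principale est un raffinement de la linéarisation d'Arnol'd : en posant $\tilde v := v\exp\!\left(\int_0^v g^{\pm}(\zeta, v')\,dv'\right)$, on obtient, quitte à rétrécir $r^{\pm}$, une nouvelle coordonnée fibrée. L'invariance de $\sigma^{\pm}$ par $\pi_1(C^{\pm})$ se traduit par l'identité $\rho^{\pm}(\gamma)\,g^{\pm}(\gamma\zeta, \rho^{\pm}(\gamma) v) = g^{\pm}(\zeta, v)$, et un simple changement de variable dans l'intégrale montre que $\tilde v$ est $\rho^{\pm}$-équivariante en $v$. Le changement de coordonnées est donc compatible avec la structure de fibré plat $C^{\pm} \times_{\rho^{\pm}} \mathbb{D}$, et, dans les nouvelles coordonnées, $\sigma^{\pm} = d\zeta \wedge d\tilde v/\tilde v$ sur $V^{\pm}$.

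Il reste à vérifier le recollement par $F$. Le relèvement $\tilde g \colon \tilde{C^+} \to \tilde{C^-}$ d'un biholomorphisme entre courbes elliptiques est affine : $\tilde g(\zeta) = a\zeta + b$. Un calcul direct donne alors $F^*(d\zeta' \wedge dv'/v') = -a\,d\zeta \wedge dv/v$ ; en remplaçant $\sigma^-$ par $-\sigma^-/a$, on obtient $F^*\sigma^- = \sigma^+$ sur $V$. Les deux formes se recollent donc en une $2$-forme holomorphe globale $\sigma$ sur $X$, partout non-nulle (car chacune des $\sigma^{\pm}$ l'est sur $M^{\pm}$), ce qui trivialise $K_X$ et vérifie $\sigma_{|V^+} = d\zeta \wedge dv/v$. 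Pour conclure que $X$ est K3, il reste la compacité (immédiate, $M^{\pm}$ étant fermés dans $S^{\pm}$ compacts) et la simple connexité : comme $E_i^{\pm} \cdot C^{\pm} = 1$, les courbes $E_i^{\pm}$ coupent $C^{\pm}$ transversalement en un unique point, et donc $E_i^{\pm} \cap M^{\pm}$ est un disque topologique bordé par un méridien de $C^{\pm}$. Ce méridien engendre $\pi_1(M^{\pm})$ (car $S^{\pm}$ est simplement connexe), si bien que $\pi_1(M^{\pm}) = 0$ ; van Kampen appliqué à $X = M^+ \cup M^-$ d'intersection $V$ connexe donne alors $\pi_1(X) = 0$.

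L'obstacle principal sera le raffinement de la linéarisation décrit au deuxième paragraphe : il faut vérifier soigneusement la $\rho^{\pm}$-équivariance de $\tilde v$ et s'assurer que ce changement de coordonnées préserve la structure de fibré plat sans altérer les propriétés de la construction de recollement.
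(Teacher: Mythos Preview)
Your plan differs from the paper's at the central step and, as written, has a gap at the recollement.

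The paper does \emph{not} refine the linearization. It observes that the ratio $\eta^{\pm}/(d\zeta\wedge\tfrac{dv}{v})$ is a holomorphic function on $C^{\pm}\times_{\rho^{\pm}}\mathbb{D}(r^{\pm})$ and shows directly that it is \emph{constant}: restricted to any level $C^{\pm}\times_{\rho^{\pm}}\mathbb{S}^1(R)$ it is bounded and holomorphic along each Cauchy--Riemann leaf (a copy of $\C$, since $\rho^{\pm}$ is injective), hence constant on each leaf by Liouville; density of the leaves (this is where the Diophantine hypothesis enters) forces constancy on each level, and holomorphy in $v$ then gives global constancy. This is \cite[lemme~2.2]{koike2019gluing}. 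Once the ratio equals a constant $A^{\pm}$, one normalises and checks $F^{*}(d\zeta\wedge\tfrac{dv}{v})=-d\zeta\wedge\tfrac{dv}{v}$ in the \emph{original} coordinates, so $\eta^{+}/A^{+}$ and $-\eta^{-}/A^{-}$ glue.

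Your refinement $\tilde v=v\exp\!\bigl(\int_0^v g^{\pm}\,dv'\bigr)$ is correct and the equivariance computation is fine, but it does not establish the gluing for the $X$ already constructed. The map $F$ was defined by $(\zeta,v)\mapsto(\tilde g(\zeta),t/v)$ in the \emph{original} Arnold coordinates; in your $(\zeta,\tilde v)$ chart it no longer has this form, so your ``calcul direct'' of $F^{*}(d\zeta'\wedge dv'/v')=-a\,d\zeta\wedge dv/v$ is carried out in a coordinate system in which $\sigma^{\pm}\neq d\zeta\wedge dv/v$. In effect you read $\sigma^{\pm}$ in one chart and compute $F^{*}$ in another. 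You can rescue the argument by performing the refinement \emph{before} defining $F$---that is, build $X$ using the refined linearization from the outset and glue by $(\zeta,\tilde v)\mapsto(\tilde g(\zeta),t/\tilde v)$. This is legitimate since Arnold's theorem does not single out a canonical chart, but it must be said explicitly, and it proves the proposition only for that particular realisation of the construction. The paper's density argument is both shorter and stronger: it shows that in \emph{any} Arnold chart one already has $\sigma^{\pm}=A^{\pm}\,d\zeta\wedge\tfrac{dv}{v}$ (equivalently, your $g^{\pm}$ vanishes identically), so no refinement is needed and the recollement is immediate. Your van Kampen sketch for $\pi_1(X)=0$ is a reasonable alternative to the paper's Mayer--Vietoris citation, though the assertion that the meridian generates $\pi_1(M^{\pm})$ deserves a line of justification.
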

\begin{proof}On a vu plus haut que $K_{S^\pm} = -C^\pm$, de sorte qu'il existe une $2$-forme méromorphe $\eta^\pm$ sur $S^\pm$, qui ne s'annule pas et avec un pôle simple le long de $C^\pm$. Alors l'expression
\[\frac{\eta^\pm}{d\zeta \wedge \frac{dv}{v}}\]
définit une application holomorphe sur $V^\pm$. Or la densité des feuilles des hypersurfaces Levi-plates et le principe du maximum permettent de montrer que cette application est constante (voir \cite[lemme 2.2]{koike2019gluing}) ; notons $A^\pm$ sa  valeur. Comme on a $F^*(d\zeta \wedge \frac{dv}{v}) = -d\zeta\wedge\frac{dv}{v}$, les $2$-formes $\frac{\eta^+}{A^+}$ et $-\frac{\eta^-}{A^-}$ se recollent en une $2$-forme $\sigma$ qui ne s'annule pas \cite[proposition 2.1]{koike2019gluing}. Dans la suite, on normalisera $\eta^\pm$ pour que $A^\pm = 1$.

Enfin, la suite exacte de Mayer-Vietoris permet de montrer que $X$ est simplement connexe \cite[proposition 2.1]{koike2019gluing}, donc $X$ est bien une surface K3.

\end{proof}

À ce stade, il est utile de noter que Koike-Uehara construisent géométriquement une base du deuxième groupe d'homologie à partir des données du recollement \cite[section 3]{koike2019gluing}. Cela sera utile par la suite, pour obtenir des surfaces K3 marquées. Nous y reviendrons. 
\paragraph{Construction de familles. }
L'intérêt de la construction de Koike-Uehara est de construire des familles de surfaces K3. Fixons $(p,q) \in \mathrm{Dioph}$. Reprenons les notations du paragraphe précédent.
\begin{enumerate}
\item Soit $t \in \C$ avec $|t| < r_{-}r_{+}$. Alors on peut recoller les extérieurs via l'application
\[\C \times_{\rho_{+}} \mathbb{A}(|t|/r_{-}, r_{+}) \to \C \times_{\rho_{-}} \mathbb{A}(|t|/r_{+}, r_{-})\]induite par
\[(\zeta, v) \mapsto (\tilde{g}(\zeta), t/v).\]
Cela donne un paramètre complexe.
\item On peut changer l'isomorphisme $g$ ci-dessus, en lui ajoutant une translation, ce qui donne un autre paramètre complexe.
\item On peut changer la classe d'isomorphisme de la courbe elliptique de départ, ce qui donne un paramètre complexe $\tau \in \mathbb{H} :=\{z\in \C\;:\;\mathrm{Im}\,z > 0\}$.
\item On peut faire varier 8 des 9 points choisis sur la courbe elliptique. Le neuvième point est alors choisi de façon à ce que si $C_0$ est isomorphe à $\C/(\Z + \Z\tau)$, alors via l'isomorphisme entre la jacobienne de $C_0$ et $\C/(\Z + \Z\tau)$, le fibré normal de $C^{+}$ soit égal à $p - q\tau$.
On peut faire la même chose pour $C^{-}$, avec un fibré normal cette fois égal à $-p + q\tau$.  Cela donne deux fois 8 paramètres complexes.
\end{enumerate}
Au total on a donc 19 paramètres complexes. On renvoie à l'article de Koike-Uehara \cite[section 4]{koike2019gluing} pour une construction détaillée des déformations.

\section{Existence d'une infinité d'hypersurfaces Levi-plates linéaires}
\label{section:infinite-hlpl}
Dans cette section, on montre qu'il existe un ensemble de mesure pleine de périodes dont les surfaces K3 marquées associées sont, à un changement de marquage près, des surfaces de Koike-Uehara d'une infinité de façons différentes, et en particuliers contiennent une infinité d'hypersurfaces Levi-plates linéaires analytiques réelles. Ce résultat est plus faible que le théorème \ref{coro-ahlfors}, mais il est nécessaire à sa preuve. 
\paragraph{Surfaces K3 et périodes. }
On rappelle ici pour fixer les notations les définitions liées aux périodes de surfaces K3 dont on fera usage (voir le livre de Huybrechts pour plus d'informations \cite[chapitres 6 et 7]{huybrechts_2016}). On note $\Lambda$ le réseau K3, c'est-à-dire $U^{\oplus 3} \oplus E_{8}(-1)^{\oplus 2}$, 
muni de sa forme quadratique notée $q(\cdot, \cdot)$.

On aura besoin d'éléments explicites de $\Lambda$ adaptés à notre situation. Considérons $A_{\alpha\beta}$, $A_{\beta\gamma}$, $A_{\gamma\alpha}$, $B_\alpha$, $B_\beta$ et $B_\gamma$ des éléments de $\Lambda$ qui engendrent un réseau $\Lambda'$ isométrique à $U^{\oplus 3}$, où $U$ est un plan hyperbolique. Plus précisément, on suppose que
\begin{align*}q(A_{\alpha\beta}, A_{\alpha\beta}) &= 0, \\
q(A_{\alpha\beta}, B_{\gamma}) &= 1, \\
q(B_{\gamma}, B_\gamma) &= -2.\end{align*}
et de façon analogue pour les permutations cycliques de $\alpha, \beta, \gamma$. On suppose alors que $\Lambda = \Lambda' \oplus \Lambda''$, la somme étant orthogonale, où $\Lambda''$ est isométrique à $E_{8}(-1)^{\oplus 2}$. Enfin, on note $a_{\alpha\beta}, a_{\beta\gamma},a_{\gamma\alpha}, b_\alpha,b_\beta, b_{\gamma}$ les formes linéaires sur $\Lambda\otimes\C$ obtenues en étendant par $0$ la base duale de $A_{\alpha\beta}$, $A_{\beta\gamma}$, $A_{\gamma\alpha}$, $B_\alpha$, $B_\beta$ et $B_\gamma$ de $\Lambda'\otimes\C$.

\begin{defn}On appelle \emph{surface K3 marquée} un couple $(X, \phi)$ où $X$ est une surface K3, et $\phi \colon H^2(X, \Z) \to \Lambda$ est une isométrie, où $H^2(X, \Z)$ est muni de sa forme d'intersection et $\Lambda$ de la forme $q$. Deux surfaces K3 marquées $(X, \phi)$ et $(X', \phi')$ sont \emph{équivalentes} s'il existe un biholomorphisme $u \colon X \to X'$ tel que $\phi \circ u^* = \phi'$.

Soit
\[\mathscr{D} := \{\C\xi \in \mathbb{P}(\Lambda \otimes\C)\,:\,q(\xi, \xi) = 0,\;q(\xi, \bar{\xi}) > 0\}\]
le \emph{domaine des périodes}. Étant donné une surface K3 marquée $(X, \phi)$, on considère l'application obtenue par tensorisation $\phi_\C \colon H^2(X, \C) \to \Lambda\otimes\C$, et on note $\mathscr{P}(X, \phi) := \phi_\C(H^{2, 0}(X)) \in \mathscr{D}$ la \emph{période} de $(X, \phi)$. On appelle $\mathscr{P}$ \emph{l'application des périodes}.
\end{defn}

On a dit que la construction de Koike-Uehara permettait de construire une base du deuxième groupe d'homologie. En fait, cela permet de construire un marquage. Les surfaces de Koike-Uehara construites dans la section précédente sont donc naturellement des surfaces K3 marquées. On peut même en déduire un calcul des périodes (sauf deux).

\begin{defn}
On appellera \emph{surface de Koike-Uehara} les surfaces K3 marquées construites ainsi. Chacune de ces surfaces contient naturellement des hypersurfaces Levi-plates linéaires irrationnelle, toutes isomorphes, qui sont naturellement associées.
\end{defn}

Si $(X, \phi)$ est une telle surface, les éléments $\phi^{-1}(A_{\alpha\beta})$, $\phi^{-1}(A_{\beta\gamma})$ et $\phi^{-1}(A_{\gamma\alpha})$ engendrent un sous-module de $H^2(X, \Z)$ qui est isomorphe à l'image par la dualité de Poincaré de $H_2(L, \Z)$, où $L$ est l'une des hypersurfaces Levi-plates linéaires associées.

Enfin, on aura besoin par la suite des deux théorèmes fondamentaux suivants sur les surfaces K3 marquées et leurs périodes.
\begin{notation}
\label{notation-generique}
Soit $\mathscr{G} := \mathscr{D}\setminus\bigcup_{\alpha \in \Lambda, \alpha \neq 0} \alpha^\perp$ (« ensemble générique »).
\end{notation}

\begin{thm}[Torelli]
\label{torelli}
Soient $(X, \phi)$ et $(X', \phi')$ deux surfaces K3 marquées telles que $\mathscr{P}(X, \phi) = \mathscr{P}(X, \phi')$. Alors $X$ et $X'$ sont biholomorphes. De plus, si $\mathscr{P}(X, \phi) = \mathscr{P}(X, \phi') \in \mathscr{G}$, alors $(X, \phi)$ et $(X', \pm\phi')$ sont équivalentes.
\end{thm}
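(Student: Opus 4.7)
Le plan serait de déduire ce théorème du théorème de Torelli fort (global) pour les surfaces K3 : toute isométrie de Hodge $\psi \colon H^2(X, \Z) \to H^2(X', \Z)$ qui envoie le cône de Kähler de $X$ dans celui de $X'$ (on dit qu'elle est \emph{effective}) provient d'un unique biholomorphisme $u \colon X' \to X$ tel que $u^* = \psi$. Ce résultat, dû à Burns-Rapoport, Piatetski-Shapiro-Shafarevich, Siu, Looijenga-Peters et d'autres selon le cas traité, est l'énoncé profond sur lequel tout repose, et je l'admettrais -- sa preuve mobilise par exemple la densité des surfaces K3 projectives dans l'espace des périodes et des déformations twistorielles dans le cas non-algébrique, et sort largement du cadre de cet article.

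Pour la première assertion, partant de $\mathscr{P}(X, \phi) = \mathscr{P}(X', \phi')$, l'isométrie $\psi := (\phi')^{-1} \circ \phi$ préserve par construction la droite $H^{2, 0}$ de chaque côté, donc aussi $H^{1, 1}$ par orthogonalité, et c'est donc automatiquement une isométrie de Hodge. Elle n'est cependant pas nécessairement effective. Pour la rendre effective, on invoque l'action du groupe de Weyl $W(X)$ engendré par les réflexions dans les classes $(-2)$ de $\mathrm{NS}(X)$ : ce groupe agit simplement transitivement sur les chambres du cône positif privé des hyperplans orthogonaux aux classes $(-2)$, et le cône de Kähler est précisément l'une de ces chambres. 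Quitte donc à pré-composer $\psi$ par un élément de $W(X)$, et éventuellement par $-\mathrm{id}$ pour basculer entre les deux composantes connexes du cône positif, on obtient une isométrie de Hodge effective, à laquelle le théorème de Torelli fort s'applique, fournissant le biholomorphisme $u \colon X \to X'$ cherché.

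Pour la seconde assertion, l'hypothèse $\mathscr{P}(X, \phi) \in \mathscr{G}$ implique qu'aucun élément non nul de $\Lambda$ n'est orthogonal à la période. Via $\phi$, cela signifie qu'aucun élément non nul de $H^2(X, \Z)$ n'est orthogonal à $H^{2, 0}(X)$, donc $\mathrm{NS}(X) = H^{1, 1}(X, \R) \cap H^2(X, \Z) = 0$. En particulier, il n'y a aucune classe $(-2)$ dans $H^{1, 1}(X, \Z)$, et le groupe de Weyl est trivial. Les seules modifications admissibles de $\psi$ sont alors $\pm\mathrm{id}$, et le biholomorphisme $u$ obtenu satisfait $\phi \circ u^* = \pm \phi'$, ce qui exprime exactement que $(X, \phi)$ et $(X', \pm\phi')$ sont équivalentes.

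L'obstacle principal est clairement le théorème de Torelli fort, qui est l'ingrédient non trivial ; l'argument ci-dessus en est essentiellement une reformulation combinatoire, réduisant l'énoncé faible au cas effectif grâce à l'action du groupe de Weyl, et exploitant dans le cas générique la trivialité du Néron-Severi pour éliminer l'ambiguïté.
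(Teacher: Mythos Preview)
Ton argument est correct et suit la voie standard : réduction au théorème de Torelli fort via l'action du groupe de Weyl sur les chambres du cône positif, puis trivialité de ce groupe dans le cas générique où $\mathrm{NS}(X) = 0$. L'article, lui, ne donne aucune preuve : il se contente de renvoyer au livre de Huybrechts \cite[chapitre 7, proposition 2.1 et 2.2, remarque 5.2]{huybrechts_2016}, où l'on trouve essentiellement l'argument que tu esquisses. Tu fournis donc strictement plus que l'article sur ce point, et ton plan est exactement celui de la référence citée.
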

\begin{proof}Voir le livre de Huybrechts \cite[chapitre 7, proposition 2.1 et 2.2, remarque 5.2]{huybrechts_2016}
\end{proof}

Citons aussi le résultat important suivant (voir encore le livre de Huybrechts \cite[chapitre 7]{huybrechts_2016} par exemple), bien qu'il ne soit pas nécessaire dans ce qui suit :
\begin{thm}[Surjectivité de l'application des périodes]
Soit $\xi \in \mathscr{D}$. Alors il existe une surface K3 marquée $(X, \phi)$ telle que $\mathscr{P}(X, \phi) = \xi$.
\label{surjectivite-periode}
\end{thm}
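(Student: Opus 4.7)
Le plan est de combiner le théorème de Torelli local (pour l'ouverture) avec un argument de connexité globale sur $\mathscr{D}$ (pour la fermeture). D'abord, j'établirais l'ouverture : par Torelli local, la différentielle de l'application des périodes $\mathscr{P}$ est partout un isomorphisme, via l'identification de l'espace tangent à l'espace de modules des surfaces K3 marquées en $(X,\phi)$ avec $H^1(X, TX)$ (Kodaira--Spencer), puis avec $T_{\mathscr{P}(X,\phi)}\mathscr{D}$ par contraction avec la $2$-forme holomorphe $\sigma$. L'image de $\mathscr{P}$ est donc ouverte dans $\mathscr{D}$.

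L'étape vraiment délicate est la fermeture. L'approche que je privilégierais utilise les familles twisteurs. Partant d'une surface K3 marquée $(X,\phi)$ quelconque (on peut par exemple commencer avec une surface de Kummer, dont la période se calcule explicitement à partir du tore complexe sous-jacent), on sait par Siu que $X$ est kählérienne ; pour chaque classe de Kähler, le théorème de Yau fournit une métrique hyperkählérienne, d'où une famille $\mathbb{P}^1$ de structures complexes sur la variété $C^\infty$ sous-jacente. Les périodes de cette famille tracent une \emph{droite twisteur} dans $\mathscr{D}$, entièrement contenue dans l'image de $\mathscr{P}$. Le fait clé, de nature essentiellement linéaire, est que deux points quelconques de $\mathscr{D}$ peuvent être reliés par une chaîne finie de telles droites twisteurs, ce qui permet de propager la propriété « être dans l'image » depuis un seul exemple vers tout $\mathscr{D}$.

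L'obstacle principal me semble être de contrôler les limites : il faut s'assurer qu'une suite $\xi_n = \mathscr{P}(X_n, \phi_n)$ convergeant vers un $\xi \in \mathscr{D}$ provient bien d'une surface K3 marquée, autrement dit que les $X_n$ ne dégénèrent pas vers quelque chose de plus pathologique qu'une surface K3. L'élégance de l'argument twisteur est précisément de contourner cet obstacle en construisant les chemins globalement sur une seule surface ambiante, sans passer à la limite sur les surfaces elles-mêmes. Une approche alternative, plus classique (Piatetski-Shapiro--Shafarevich), passerait par la densité dans $\mathscr{D}$ des périodes de surfaces K3 algébriques combinée à un argument de propreté du morphisme des périodes polarisé, reposant \emph{in fine} sur le théorème de Kulikov sur les dégénérescences semi-stables des surfaces K3 ; cette voie est conceptuellement plus directe mais techniquement plus lourde.
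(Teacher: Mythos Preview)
Le papier ne démontre pas ce théorème : il se contente de le citer comme résultat classique, avec renvoi au chapitre~7 du livre de Huybrechts, en précisant même qu'il n'est \og pas nécessaire dans ce qui suit \fg. Il n'y a donc pas de preuve propre à comparer à la tienne.

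Cela étant, ton esquisse est correcte et reproduit fidèlement l'approche moderne (celle que l'on trouve précisément dans la référence citée par le papier) : Torelli local pour l'ouverture de l'image, puis familles twisteurs issues de Yau pour propager la surjectivité le long de chaînes de droites twisteurs reliant deux points quelconques de $\mathscr{D}$. Tu identifies aussi correctement l'approche alternative via les dégénérescences de Kulikov dans le cas polarisé. Tu fournis donc strictement plus que le papier, qui délègue entièrement l'argument à la littérature.
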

Le fait bien connu suivant permettra d'utiliser des outils de théorie ergodique pour les espaces de périodes. Notons que l'on utilise pas les résultats explicites de l'approche de Verbitsky \cite{verbitsky2015ergodic} mais uniquement ses idées, dans un cadre technique un peu plus simple.
\begin{prop}
L'espace des périodes $\mathscr{D}$ est difféomorphe à la grassmanienne $\mathrm{Gr}_2^{\mathrm{po}}(\Lambda\otimes\R)$ des plans réels de dimension $2$ définis positifs orientés de l'espace $\Lambda\otimes\R$, qui est difféomorphe à $SO^\circ(3, 19)/(SO(2) \times SO(1, 19))$.
\label{periodes-homogene}
\end{prop}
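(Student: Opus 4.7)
Le plan est de construire un difféomorphisme explicite entre $\mathscr{D}$ et la grassmannienne $\mathrm{Gr}_2^{\mathrm{po}}(\Lambda\otimes\R)$, puis d'identifier cette dernière au quotient homogène annoncé grâce à l'action transitive de $SO^\circ(3, 19)$.

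Pour la première identification, étant donné un représentant $\xi = x + iy \in \Lambda\otimes\C$ d'un élément $\C\xi \in \mathscr{D}$, les équations $q(\xi, \xi) = 0$ et $q(\xi, \bar\xi) > 0$ s'expriment, en développant par $\C$-bilinéarité, par $q(x, x) = q(y, y) > 0$ et $q(x, y) = 0$. Donc $(x, y)$ est une base orthogonale de même norme d'un plan réel $P \subset \Lambda\otimes\R$ défini positif de dimension $2$, que l'on oriente par cette base ordonnée. L'association $\C\xi \mapsto P$ est indépendante du choix du représentant : multiplier $\xi$ par $\lambda = re^{i\theta}\in \C^*$ revient à appliquer à $(x, y)$ une rotation d'angle $\theta$ suivie d'une homothétie de rapport $r > 0$, qui préservent le plan orienté. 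L'application réciproque associe à $P$ muni d'une base orthonormée directe $(e_1, e_2)$ la droite $\C(e_1 + ie_2)$ ; le choix d'une autre telle base change $e_1 + ie_2$ par multiplication par un élément de $U(1)$, et fournit donc le même point projectif, qui appartient manifestement à $\mathscr{D}$. Ces deux constructions sont lisses et inverses l'une de l'autre.

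Pour la seconde identification, on remarque d'abord que la signature de $q$ sur $\Lambda\otimes\R$ est $(3, 19)$, car $U^{\oplus 3}$ contribue $(3, 3)$ et $E_{8}(-1)^{\oplus 2}$ contribue $(0, 16)$. Le groupe $SO^\circ(3, 19)$ agit sur $\Lambda\otimes\R$ en préservant $q$ et l'orientation, donc naturellement sur $\mathrm{Gr}_2^{\mathrm{po}}(\Lambda\otimes\R)$ ; le théorème de Witt montre que cette action est transitive. En fixant un plan orienté de référence $P_0$, son stabilisateur préserve $P_0$ (par un élément de $SO(2)$, grâce à la conservation de l'orientation) ainsi que son supplémentaire orthogonal $P_0^\perp$, de signature $(1, 19)$ ; la contrainte d'appartenance à la composante neutre $SO^\circ(3, 19)$ force alors le facteur sur $P_0^\perp$ à être dans $SO(1, 19)$. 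La principale subtilité technique sera de manipuler soigneusement les composantes connexes et les orientations pour montrer que l'on obtient ainsi \emph{exactement} $SO(2) \times SO(1, 19)$ comme stabilisateur, sans sous-groupe strict ni surgroupe ; le reste n'est qu'une traduction directe des conditions définissant $\mathscr{D}$.
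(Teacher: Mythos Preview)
Ta démarche est correcte et suit exactement la même approche que celle du papier : décomposer $\xi$ en parties réelle et imaginaire pour obtenir le plan orienté, construire l'inverse à partir d'une base orthonormée directe, puis identifier la grassmannienne à un espace homogène via l'action transitive de $SO^\circ(q)$. Tu ajoutes simplement quelques détails que le papier laisse implicites (vérification de l'indépendance du représentant, calcul de la signature, transitivité par Witt, discussion des composantes connexes pour le stabilisateur), mais l'argument est le même.
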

\begin{proof}
Soit $\xi \in  \mathbb{P}(\Lambda \otimes \C)$. Considérons $x$ et $y$ les parties réelles et imaginaires d'un vecteur de $\xi$ non nul. Alors la relation $q(\xi, \xi) = 0$ entraîne que $q(x, x) = q(y, y)$ et $q(x, y) = 0$. La relation $q(\xi, \overline{\xi})>0$ donne que $q(x, x) + q(y, y) >0$. Donc $(x, y)$ engendre un plan réel défini positif orienté.
Réciproquement, si $P$ est un tel plan, et $(x, y)$ une base orthonormée directe de $P$, alors la droite $\C(x + iy)$ définit un élément de $\mathscr{D}$.

Enfin, l'espace $\mathrm{Gr}_2^{\mathrm{po}}(\Lambda\otimes\R)$ est un espace homogène pour l'action de $SO^\circ(q)$, la composante neutre de $SO(q)$. On peut choisir un isomorphisme $SO^\circ(q) \simeq SO^\circ(3, 19)$ de sorte que le stabilisateur d'un point fixé soit $SO(2)\times SO(1,19)$, plongé naturellement. 
\end{proof}

\begin{notation}
Étant donné $\xi \in \mathscr{D}$, on note $P(\xi) \subset \Lambda \otimes \R$ le plan réel correspondant à $\xi$ via l'isomorphisme précédent.
\end{notation}

\paragraph{Existence d'une infinité d'HLPLI }
Nous aurons besoin du résultat suivant, conséquence des travaux de Koike-Uehara, dont on donne la preuve en annexe.
\begin{coro}
\label{thm-construction}
\label{coro-borelien}
Il existe un borélien $B$ de $\mathscr{D}$ qui n'est pas de mesure nulle dont tous les éléments sont des périodes réalisées par une surface de Koike-Uehara.
\end{coro}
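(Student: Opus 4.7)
La stratégie repose sur un comptage de dimensions. La construction de Koike-Uehara fournit $19$ paramètres complexes pour $(p, q) \in \mathrm{Dioph}$ fixé, soit $2 \cdot 19 + 2 = 40$ paramètres réels au total en tenant compte de la variation de $(p, q)$. Ceci coïncide avec la dimension réelle $40$ de l'espace des périodes $\mathscr{D}$ (proposition \ref{periodes-homogene}). Il est donc naturel d'espérer que l'image de l'application des périodes $\Phi$ ainsi obtenue, d'un ouvert de $\C^{19} \times \mathrm{Dioph}$ dans $\mathscr{D}$, contienne un ouvert non vide, ce qui suffirait à conclure.

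Le plan est le suivant. Je vérifierais d'abord que $\Phi$ est analytique réelle au voisinage de tout point générique : la construction est holomorphe en les $19$ paramètres complexes à $(p, q)$ fixé, et la dépendance en $(p, q)$ s'obtient à partir d'une version paramétrique du théorème de linéarisation d'Arnol'd, sur des sous-ensembles compacts vérifiant une condition diophantienne uniforme. Ensuite, il suffit de montrer que la différentielle de $\Phi$ est de rang maximal $40$ en au moins un point : le théorème d'inversion locale (sous forme de submersion) montre alors que l'image contient un ouvert de $\mathscr{D}$, qui est en particulier un borélien de mesure positive.

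Pour vérifier la condition de rang, je m'appuierais sur les calculs explicites des périodes de Koike-Uehara \cite[section 3]{koike2019gluing}, qui expriment $20$ des $22$ coordonnées de la période en fonction des paramètres, dans la base construite à partir des éléments $A_{\alpha\beta}, A_{\beta\gamma}, A_{\gamma\alpha}, B_\alpha, B_\beta, B_\gamma$ de $\Lambda'$ introduits plus haut. On peut alors extraire une sous-matrice de la jacobienne et vérifier : à $(p, q)$ fixé, que les $19$ paramètres complexes contribuent $38$ dimensions réelles indépendantes à l'image ; puis que la variation de $(p, q)$ — qui modifie le fibré normal des courbes elliptiques de départ et donc la classe d'isomorphisme de la surface K3 obtenue — apporte les $2$ dimensions restantes, de manière transverse aux dimensions précédentes.

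Le principal obstacle technique est précisément cette vérification de rang, et plus particulièrement la transversalité de la variation en $(p, q)$ par rapport aux $19$ paramètres complexes : il faut tracer avec soin la dépendance de toutes les coordonnées de la période en fonction de l'ensemble des paramètres, et s'assurer que le changement de fibré normal associé à la variation de $(p, q)$ ne peut pas être compensé par un choix adéquat des autres paramètres. Une fois cette étape effectuée, le reste de la démonstration est purement formel.
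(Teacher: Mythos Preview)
Your dimension count is correct, but the plan of producing an \emph{open} subset of $\mathscr{D}$ cannot work as stated. The set $\mathrm{Dioph}$ has empty interior in $\R^2$ (rational pairs are dense and lie in its complement), so even if $\Phi$ extended to a submersion on an open set of $\C^{19}\times\R^2$, the image of $\C^{19}\times\mathrm{Dioph}$ would still not contain any open set of $\mathscr{D}$. Worse, such an extension is itself problematic: the construction passes through Arnol'd's linearisation, whose convergence radius depends on the Diophantine constants via small divisors, so there is no reason for the resulting map to be real-analytic --- or even defined --- in $(p,q)$ across an open neighbourhood. What one can realistically hope for is uniform control on each closed set $\mathrm{Dioph}(C,\alpha)$, and this does not yield an open domain for the inverse function theorem.

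The paper avoids these difficulties by not attempting a global parametrisation in $(p,q)$. It invokes directly the result of Koike--Uehara \cite[Theorem~1.6]{koike2019gluing}, which already asserts that for each fixed Diophantine $(p,q)$ the period map of the $19$-parameter family realises an explicit nonempty open set $\Xi_{(p,q)}$ of the hyperplane section $\mathscr{D}\cap v_{(p,q)}^\perp$; no Jacobian computation is needed. One then observes that there is a submersion $V\colon U\to\R^2$, defined on a full-measure open $U\subset\mathscr{D}$, whose fibre over $(p,q)$ is precisely this hyperplane section, so the candidate set is $B=\bigcup_{(p,q)\in\mathrm{Dioph}}\Xi_{(p,q)}$. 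The delicate point is measurability: the sets $\Xi_{(p,q)}$ are cut out by an inequality $q(\xi,\bar\xi)>\Lambda_{(p,q)}(\xi)$ where $\Lambda$ involves the \emph{maximal} linearisation neighbourhood; quantifying Siegel's theorem shows that $\xi\mapsto\Lambda_{V(\xi)}(\xi)$ is upper semicontinuous on each $V^{-1}(\mathrm{Dioph}(C,\alpha))$, so $B\cap V^{-1}(\mathrm{Dioph}(C,\alpha))$ is relatively open there, and a countable union over $(C,\alpha)$ gives a Borel set. Positive measure then follows by Fubini along the submersion $V$, since $\mathrm{Dioph}$ has full measure in $\R^2$ and each $\Xi_{(p,q)}$ is a nonempty open subset of its fibre.
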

\begin{rem}
Sur une variété lisse, la classe de la mesure de Lebesgue est  bien définie et ainsi la notion de mesure nulle a un sens, de même que la notion de mesure pleine. Sauf mention du contraire, on utilisera cette notion sur les variétés que l'on considère, et en particulier $\mathscr{D}$.
\end{rem}
Soit $\Gamma = O(\Lambda, q) \cap SO^\circ(q)$ le groupe d'isométries du réseau $\Lambda$. C'est un groupe arithmétique, donc un réseau de $SO^\circ(q)$ par le théorème de Borel-Harish-Chandra \cite{borelharishchandra}.

\begin{notation}
On fixe dans toute la suite $B$ un borélien comme dans le corollaire \ref{coro-borelien} et $A := \cup_{\gamma \in \Gamma} \gamma B$.
\end{notation}
\begin{prop}Le borélien $A$ est de mesure pleine.
\end{prop}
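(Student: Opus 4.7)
Le plan est d'appliquer un argument classique d'ergodicité : l'ensemble $A$ est $\Gamma$-invariant par construction, et contient $B$ qui est de mesure positive, donc si l'on peut montrer que l'action de $\Gamma$ sur $\mathscr{D}$ est ergodique pour la classe de la mesure de Lebesgue, on conclura immédiatement que $A$ est de mesure pleine.

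Pour établir l'ergodicité de l'action, je procéderais ainsi. D'après la proposition \ref{periodes-homogene}, on peut identifier $\mathscr{D}$ à l'espace homogène $G/H$ où $G = SO^\circ(3, 19)$ et $H = SO(2) \times SO(1, 19)$. Le groupe $\Gamma$ est un réseau de $G$ par le théorème de Borel-Harish-Chandra rappelé juste avant l'énoncé. Les groupes $G$ et $H$ sont unimodulaires (car $G$ est semi-simple et $H$ est produit d'un groupe compact par un semi-simple), ce qui assure que $G/H$ porte une mesure $G$-invariante, dont la classe coïncide avec celle de la mesure de Lebesgue lisse sur la variété $\mathscr{D}$.

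L'étape centrale est alors l'invocation du théorème de Howe-Moore : puisque $H$ contient le facteur non compact $SO^\circ(1, 19)$, l'action de $H$ sur $G/\Gamma$ par translation est mélangeante, en particulier ergodique. Par le principe de dualité de Moore (qui échange l'ergodicité de $H$ agissant sur $G/\Gamma$ avec celle de $\Gamma$ agissant sur $G/H$), on en déduit que l'action de $\Gamma$ sur $\mathscr{D} \simeq G/H$ est ergodique pour la classe de mesure de Lebesgue.

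Enfin, l'ensemble $A = \bigcup_{\gamma \in \Gamma} \gamma B$ est par construction un borélien $\Gamma$-invariant, qui contient $B$ et donc est de mesure strictement positive. Par ergodicité, son complémentaire, qui est aussi $\Gamma$-invariant, est nécessairement de mesure nulle, d'où la conclusion. La seule subtilité technique est de s'assurer du bon usage de la dualité de Moore et de la coïncidence entre la mesure invariante sur $G/H$ et la classe de Lebesgue ; tout le reste est une application directe d'outils standards de dynamique homogène.
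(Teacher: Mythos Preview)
Your proof is correct and follows essentially the same approach as the paper: invoke the identification of $\mathscr{D}$ with $SO^\circ(3,19)/(SO(2)\times SO(1,19))$, use that $\Gamma$ is a lattice, and apply Howe--Moore to conclude ergodicity, whence the $\Gamma$-invariant set $A\supset B$ is of full measure. The only cosmetic difference is that the paper states Howe--Moore directly in the form ``a $\Gamma$-invariant Borel set in $G/H$ is null or conull'' and applies it in one line, whereas you unpack this into the more classical statement about mixing of $H$ on $G/\Gamma$ followed by Moore's duality; this is precisely how one proves the version the paper quotes, so the arguments are the same in substance.
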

\begin{proof}
Par le théorème de Howe-Moore \ref{howe-moore} ci-dessous, l'action de $\Gamma$ sur $\mathscr{D}$, qui s'identifie à l'action d'un réseau de $SO^\circ(3, 19)$ sur $SO^\circ(3, 19)/\left(SO(2)\times SO(1, 19)\right)$ par translation (voir la preuve de la propositon \ref{periodes-homogene}), est ergodique.
\end{proof}

\begin{thm}[Howe-Moore \cite{zimmer2013ergodic}]
Soit $G$ un groupe de Lie connexe presque simple à centre fini. Soit $\Gamma$ un réseau de $G$.

Soit $H$ un sous-groupe d'adhérence non compacte de $G$.

Si $A \subset G/H$ est un borélien invariant par $\Gamma$ à gauche, alors soit $A$ est de mesure nulle, soit $A$ est de mesure pleine.
\label{howe-moore}
\end{thm}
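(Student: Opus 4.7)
Le plan serait de démontrer ce théorème classique par la méthode standard reposant sur la décroissance des coefficients matriciels des représentations unitaires de $G$. On commencerait par se ramener à une question d'ergodicité par dualité : un borélien $\Gamma$-invariant $A \subset G/H$ se relève en une partie mesurable de $G$ à la fois $\Gamma$-invariante à gauche et $H$-invariante à droite ; ainsi, l'action de $\Gamma$ sur $G/H$ est ergodique si et seulement si l'action de $H$ par translations à droite sur $\Gamma\backslash G$ l'est, cet espace étant muni d'une mesure de probabilité $G$-invariante puisque $\Gamma$ est un réseau.

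On considère alors la représentation unitaire régulière de $G$ sur $L^2_0(\Gamma\backslash G)$, orthogonal des fonctions constantes. L'ergodicité de $H$ équivaut à l'absence de vecteur $H$-invariant non nul dans cet espace. Le point crucial, qui constitue le véritable contenu de Howe-Moore, est la décroissance des coefficients matriciels : pour toute représentation unitaire $\pi$ d'un groupe de Lie presque simple $G$ à centre fini sans vecteur $G$-invariant, les coefficients $g\mapsto\langle\pi(g)v, w\rangle$ tendent vers $0$ lorsque $g$ sort de tout compact de $G$. Si un vecteur $v \in L^2_0(\Gamma\backslash G)$ était $H$-invariant, en choisissant une suite $h_n$ dans $H$ sortant de tout compact — ce qui est possible puisque l'adhérence de $H$ n'est pas compacte — on obtiendrait $\langle\pi(h_n) v, v\rangle = \norm{v}^2$ pour tout $n$, contredisant la décroissance à l'infini.

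Il reste à établir la décroissance des coefficients matriciels, qui constitue l'obstacle principal. On utiliserait la décomposition $KAK$ de $G$ pour se ramener à étudier les $\pi(a)$ pour $a$ sortant à l'infini dans un tore déployé maximal $A$. Le cœur technique est alors le \emph{phénomène de Mautner} : si $v$ est fixé par un élément $a \in A$ et si $u$ appartient à un sous-groupe unipotent contracté par la conjugaison par $a$, alors $v$ est également fixé par $u$. Combiné avec l'analyse spectrale des représentations des sous-groupes isomorphes à $SL_2(\R)$ (plongés dans $G$ via les sous-groupes associés aux racines), ceci force toute valeur d'adhérence faible de $\pi(a_n)v$ à être fixée par $G$ tout entier, donc nulle. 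Pour $G = SO^\circ(3, 19)$, qui est presque simple à centre fini, tous ces ingrédients structurels sont disponibles et l'argument s'applique sans difficulté supplémentaire.
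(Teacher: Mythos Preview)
Le papier ne démontre pas ce théorème : il est simplement cité comme résultat connu tiré de \cite{zimmer2013ergodic}, sans preuve. Votre esquisse est correcte et reproduit l'argument standard (dualité entre l'ergodicité de $\Gamma$ sur $G/H$ et celle de $H$ sur $\Gamma\backslash G$, décroissance des coefficients matriciels via $KAK$ et phénomène de Mautner). Il n'y a donc rien à comparer pour ce théorème précis.

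Il vaut toutefois la peine de signaler que le papier redéploie exactement ces idées --- phénomène de Mautner, structure des sous-espaces propres de $\mathrm{ad}\,X$ pour $X$ hyperbolique --- dans la preuve d'un énoncé voisin mais plus fin (la proposition sur les pavages mesurables qui suit immédiatement le théorème de Howe-Moore dans le texte). Là, l'argument ne passe pas par les coefficients matriciels d'une représentation unitaire, mais par une fonction $g\mapsto\nu(A(g))$ mesurant le chevauchement d'une tuile avec sa translatée ; la continuité de cette fonction remplace la continuité des coefficients, et le phénomène de Mautner s'applique directement à l'ensemble $S=\{g:Pg=P\}$ pour montrer que $S=G$. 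Votre esquisse et cette preuve partagent donc le même moteur (Mautner + simplicité de $\mathfrak{g}$), mais l'emballage diffère : représentations unitaires d'un côté, analyse directe d'une fonction d'intersection de l'autre.
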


Rappelons qu'un groupe de Lie est \emph{presque simple} si son algèbre de Lie est simple.

\begin{coro}
Pour presque toute période $\xi \in \mathscr{D}$ et toute surface K3 marquée $(X, \phi)$ de période $\xi$, il existe une surface de Koike-Uehara $Y$ qui est biholomorphe à $X$.

En particulier, pour presque toute période $\xi \in \mathscr{D}$ et toute surface $K3$ marquée $(X, \phi)$ de période $\xi$, $X$ contient une hypersurface Levi-plate linéaire analytique réelle.
\end{coro}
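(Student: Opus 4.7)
L'approche que je proposerais combine directement la mesure pleine du borélien $A$ (déjà établie via Howe-Moore) avec un simple changement de marquage et la première partie du théorème de Torelli. Le point-clé est la fonctorialité de l'application des périodes par rapport à l'action de $\Gamma = O(\Lambda, q) \cap SO^\circ(q)$ : pour toute surface K3 marquée $(Y, \psi)$ et tout $\gamma \in \Gamma$, le couple $(Y, \gamma \circ \psi)$ est encore une surface K3 marquée, et sa période est $\gamma \cdot \mathscr{P}(Y, \psi)$. L'ensemble des périodes réalisées par une surface complexe sous-jacente à une surface de Koike-Uehara est donc $\Gamma$-invariant.

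En détail, je prendrais $\xi \in A$, et j'écrirais $\xi = \gamma \cdot \xi'$ avec $\gamma \in \Gamma$ et $\xi' \in B$. Par le corollaire \ref{thm-construction}, il existe une surface de Koike-Uehara $(Y, \psi)$ de période $\xi'$. Le couple $(Y, \gamma \circ \psi)$ est alors une surface K3 marquée de période $\xi$. Par conséquent, pour toute surface K3 marquée $(X, \phi)$ de période $\xi$, le théorème de Torelli \ref{torelli} (dont seule la première partie, valable sur $\mathscr{D}$ tout entier, est nécessaire ici) entraîne que $X$ est biholomorphe à $Y$. La surface $Y$ étant, par construction, une surface de Koike-Uehara, cela donne la première partie de l'énoncé.

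Pour la partie « en particulier », il suffit d'observer qu'une surface de Koike-Uehara contient par définition des hypersurfaces Levi-plates linéaires analytiques réelles irrationnelles (les $C^+ \times_{\rho^+} \mathbb{S}^1(R)$ de la section précédente, cf.\ aussi l'exemple \ref{hlpl-elliptique}) ; l'image d'une telle hypersurface par un biholomorphisme entre surfaces complexes est encore une HLPL(I), ce qui conclut. Il n'y a pas d'obstacle substantiel ici : toute la difficulté a été concentrée dans l'ergodicité de l'action de $\Gamma$ (proposition déjà prouvée via Howe-Moore) et dans la construction de l'ensemble $B$ de mesure positive (corollaire \ref{thm-construction}). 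En particulier, le raffinement générique de Torelli n'intervient pas.
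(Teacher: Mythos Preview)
Your argument is correct and is essentially identical to the paper's proof: pick $\xi \in A$, translate by an element of $\Gamma$ to land in $B$, change the marking on the Koike--Uehara surface accordingly, and apply the first part of Torelli. The paper leaves the ``en particulier'' implicit and does not comment on which part of Torelli is used, but the substance is the same.
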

\begin{proof}
Soient $\xi \in A$ (l'ensemble $A$ étant de mesure pleine) et $(X, \phi)$ comme dans l'énoncé. Par hypothèse, il existe $\gamma \in \Gamma$ tel que $\gamma \xi \in B$. Alors il existe une surface de Koike-Uehara $(Y, \psi')$ de période $\gamma\xi$. La surface $(Y, \gamma^{-1}\circ\psi')$ a pour période $\xi$, donc par le théorème de Torelli \ref{torelli}, $Y$ est biholomorphe à $X$.
\end{proof}

En fait, on peut faire mieux. Considérons une surface complexe $X$ et $L \subset X$ un tore de dimension $3$. Alors on a une application de $H_2(L, \Z)$ dans $H_2(X, \Z)$ induite par l'inclusion ; par dualité de Poincaré, on peut identifier son image à un sous-module de $H^2(X, \Z)$ de rang au plus $3$ que l'on note $h(L)$. Notons que si $X'$ est une autre surface complexe, et $f \colon X' \to X$ est un biholomorphisme, $f^{-1}(L)$ est encore un tore de dimension $3$ et la naturalité de la dualité de Poincaré montre que $h(f^{-1}(L)) = f^*h(L)$.

 On va définir une application qui à $\xi \in A$ associe l'ensemble des $h(L)$, vus dans le marquage, où $L$ est une HLPL de $X$ obtenue par application d'un élément du groupe $\Gamma$ à une surface de Koike-Uehara. On aura seulement besoin de la proposition-définition suivante. Notons $\mathscr{E}$ l'ensemble — dénombrable — des sous-modules de rang au plus $3$ de $\Lambda$.

\begin{prop}Il existe une application mesurable $\mathscr{H} \colon \mathscr{D} \to \mathscr{P}(\mathscr{E})$ qui est $\Gamma$-équivariante et telle que pour tout $\xi \in \mathscr{G}$, toute surface K3 marquée $(X, \phi)$ de période $\xi$, et tout $M \in \phi^{-1}\left(\mathscr{H}(\xi)\right)$, il existe une hypersurface Levi-plate linéaire irrationnelle $L \subset X$ telle que $h(L) = M$.
\label{prop:equiv-homologie}
\end{prop}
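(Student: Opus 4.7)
Notons $M_0 := \langle A_{\alpha\beta}, A_{\beta\gamma}, A_{\gamma\alpha}\rangle \in \mathscr{E}$ le sous-module de $\Lambda$ qui, dans le marquage naturel des surfaces de Koike-Uehara, est l'image par la dualité de Poincaré de l'homologie d'une des HLPLI naturellement contenues dans une telle surface (comme rappelé après la définition des surfaces de Koike-Uehara). L'idée est de partir de $M_0$ sur le borélien $B$ puis de déplacer par $\Gamma$ pour obtenir une application définie sur tout l'ensemble de mesure pleine $A$ : on pose
\[\mathscr{H}(\xi) := \left\{\gamma^{-1} \cdot M_0 \;:\; \gamma \in \Gamma,\; \gamma\xi \in B\right\},\]
cet ensemble étant vide pour $\xi \notin A$. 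L'équivariance $\mathscr{H}(\gamma_0\xi) = \gamma_0\mathscr{H}(\xi)$ s'obtient par le changement de variable $\eta = \gamma\gamma_0$, et la mesurabilité découle de la dénombrabilité de $\Gamma$ : pour tout $M \in \mathscr{E}$, l'ensemble $\{\xi \in \mathscr{D} : M \in \mathscr{H}(\xi)\}$ est la réunion dénombrable de boréliens $\bigcup_{\gamma \in \Gamma,\;\gamma^{-1}M_0 = M} \gamma^{-1}B$.

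Il reste à vérifier la propriété géométrique pour $\xi \in \mathscr{G}$. Soient donc $(X, \phi)$ une surface K3 marquée de période $\xi$ et $M \in \phi^{-1}(\mathscr{H}(\xi))$, que l'on écrit $\phi(M) = \gamma^{-1}M_0$ pour un certain $\gamma \in \Gamma$ avec $\gamma\xi \in B$. D'après le corollaire \ref{coro-borelien}, il existe une surface de Koike-Uehara $(Y, \psi)$ de période $\gamma\xi$, contenant une HLPLI $L_Y \subset Y$ telle que $\psi(h(L_Y)) = M_0$. La surface marquée $(Y, \gamma^{-1}\circ\psi)$ a alors pour période $\gamma^{-1}\cdot(\gamma\xi) = \xi \in \mathscr{G}$, de sorte que le théorème de Torelli \ref{torelli} fournit un biholomorphisme $u \colon X \to Y$ vérifiant $\phi \circ u^* = \pm\,\gamma^{-1}\circ\psi$. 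L'image réciproque $L := u^{-1}(L_Y)$ est encore une HLPLI de $X$, et la naturalité de la dualité de Poincaré donne
\[h(L) = u^*\bigl(h(L_Y)\bigr) = u^*\psi^{-1}(M_0) = \phi^{-1}(\gamma^{-1}M_0) = \phi^{-1}(\phi(M)) = M,\]
l'ambiguïté de signe dans Torelli étant sans incidence puisque $h(L)$ est un sous-module.

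\textbf{Principale difficulté attendue.} Le contenu géométrique de la proposition repose intégralement sur des résultats déjà établis : l'identification du sous-module $M_0$ aux classes issues des HLPLI via le marquage géométrique de Koike-Uehara, le théorème de Torelli pour transférer la structure d'une surface à l'autre, et le corollaire \ref{coro-borelien} pour la réalisation des périodes dans $B$. L'essentiel du travail consiste à bien articuler les actions de $\Gamma$ sur les périodes et sur les marquages afin de garantir simultanément l'équivariance et la mesurabilité, puis à gérer proprement l'ambiguïté de signe fournie par Torelli dans le cas générique — ce qui ne pose pas de difficulté puisqu'on raisonne au niveau des sous-modules.
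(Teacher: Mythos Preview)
Your proposal is correct and follows essentially the same approach as the paper: the paper also defines $\mathscr{H}(\xi)$ as the $\Gamma$-saturation of the constant module $\Z A_{\alpha\beta}\oplus\Z A_{\beta\gamma}\oplus\Z A_{\gamma\alpha}$ over $B$, checks measurability via the sets $\{\xi:e\in\mathscr{H}(\xi)\}$, and concludes by applying Torelli on $\mathscr{G}$ exactly as you do. Your treatment of the sign ambiguity (noting that a submodule is stable under $-1$) is in fact slightly more explicit than the paper's.
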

Ici, on a noté $\mathscr{P}(\mathscr{E})$ l'ensemble des parties de $\mathscr{E}$, muni de la tribu engendrée par les parties finies dont les éléments appartiennent à $\mathscr{P}(\mathscr{E})$.

\begin{proof}
Étant donné $\xi \in B$, soit $(X, \phi)$ la surface de Koike-Uehara associée et considérons l'une des HLPLI associées, notée $L$.  On peut considérer le sous-module de $H^2(X, \Z)$ défini plus haut $h(L)$, qui s'identifie via $\phi$ à un sous-module de $\Lambda$. On définit naturellement de la sorte une application $H \colon B \to \mathscr{E}$, où l'on note $\mathscr{E}$ l'ensemble des sous-modules de rang au plus $3$ de $\Lambda$, qui est un ensemble dénombrable. En fait, l'application $H$ est constante, égale à $\Z A_{\alpha\beta} \oplus \Z A_{\beta\gamma} \oplus \Z A_{\gamma\alpha}$.

Étant donné $\xi\in A$, notons $\Gamma_\xi := \{\gamma\in\Gamma\,:\,\gamma\xi \in B\}$ et $\mathscr{H}(\xi) := \{\gamma^{-1}H(\gamma\xi)\,:\,\gamma\in\Gamma_\xi\}$. 

Montrons que l'application $\mathscr{H}$ est mesurable. Soit $e \in \mathscr{E}$. Alors l'ensemble $\mathscr{D}_e := \{\xi \in \mathscr{D}\,:\,e \in \mathscr{H}(\xi)\}$ s'écrit
\[\mathscr{D}_e = \{\xi\in \mathscr{D}\,:\,\text{ il existe } \gamma \in \Gamma \text{ tel que } \gamma\xi \in A \text{ et } \gamma^{-1}H(\gamma\xi) = e\}.\]
C'est donc un ensemble mesurable. Ainsi $\mathscr{H}$ est bien une application mesurable.

Soit $\xi \in \mathscr{G}$ et $(X, \phi)$ une surface K3 marquée de période $\xi$. Soit $M \in \phi^{-1}\left(\mathscr{H}(\xi)\right)$. Alors il existe $\gamma \in \Gamma_\xi$ tel que $\gamma \phi(M) = H(\gamma\xi)$, donc il existe une surface de Koike-Uehara marquée $(Y, \psi)$ de période $\gamma\xi$ qui admet une HLPLI, disons $L \subset Y$, telle que $\psi(h(L)) = \gamma \phi(M)$. Alors $\gamma^{-1}(Y, \psi) = (Y, \gamma\circ\psi)$ a $\xi \in \mathscr{G}$ pour période, donc il existe un biholomorphisme $u \colon X \to Y$ tel que $\gamma^{-1} \circ\psi = \pm\phi \circ u^{*}$. 
Alors on a
\[\phi \circ h(u^{-1}(L)) = \phi(u^{*}(h(L)) = \pm\gamma^{-1}\circ\psi(h(L)) = \pm \gamma^{-1}\left(\gamma \phi(M)\right) = \phi(M),\]
d'où le résultat : $u^{-1}(L)$ est une HLPLI de $X$ et on a $h(u^{-1}(L)) = M$.
\end{proof}

\begin{prop}
Pour presque tout $\xi \in \mathscr{D}$, $\mathscr{H}(\xi)$ est infini.
En particulier, pour presque tout $\xi \in \mathscr{D}$ et toute surface K3 marquée $(X, \phi)$ de période $\xi$, $X$ contient une infinité d'hypersurfaces Levi-plates linéaires dont les groupes d'homologie sont distincts.
\label{infinite-hlpl}
\end{prop}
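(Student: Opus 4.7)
L'idée est d'exploiter l'ergodicité de l'action de $\Gamma$ sur $\mathscr{D}$ (utilisée à la proposition précédente pour montrer que $A$ est de mesure pleine), appliquée cette fois à la fonction $N \colon \xi \mapsto |\mathscr{H}(\xi)| \in \N \cup \{\infty\}$. Par la $\Gamma$-équivariance de $\mathscr{H}$ (proposition \ref{prop:equiv-homologie}) et le fait que chaque $\gamma \in \Gamma$ agit bijectivement sur $\mathscr{E}$, la fonction $N$ est $\Gamma$-invariante ; sa mesurabilité découle de celle de $\mathscr{H}$ et de la dénombrabilité de $\mathscr{E}$, puisque $\{N \geq n\}$ s'exprime comme une réunion dénombrable sur les $n$-uplets $(e_1,\ldots,e_n)$ d'éléments distincts de $\mathscr{E}$ des intersections $\bigcap_i \{\xi : e_i \in \mathscr{H}(\xi)\}$, ensembles mesurables par hypothèse. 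Par ergodicité, $N$ est donc presque partout égale à une constante $k \in \N \cup \{\infty\}$, et il reste à exclure le cas $k$ fini.

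Pour analyser cette situation, rappelons que pour $\xi \in A$ on a $\mathscr{H}(\xi) = \{\gamma^{-1} e_0 : \gamma \in \Gamma,\,\gamma\xi\in B\}$ avec $e_0 := \Z A_{\alpha\beta} \oplus \Z A_{\beta\gamma} \oplus \Z A_{\gamma\alpha}$, et deux tels éléments $\gamma_1^{-1}e_0$, $\gamma_2^{-1}e_0$ coïncident précisément lorsque $\gamma_1$ et $\gamma_2$ appartiennent à la même classe de $\mathrm{Stab}_\Gamma(e_0) \backslash \Gamma$. L'ensemble $\{N \geq k+1\}$ étant $\Gamma$-invariant, il suffira donc, par ergodicité, d'exhiber pour chaque entier $k$ un borélien de mesure positive sur lequel $N \geq k+1$ ; on conclura en faisant croître $k$.

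Concrètement, on cherchera pour $k$ fixé un borélien de mesure positive de $\xi$ tels que $\Gamma_\xi := \{\gamma \in \Gamma : \gamma\xi \in B\}$ rencontre au moins $k+1$ classes distinctes de $\mathrm{Stab}_\Gamma(e_0) \backslash \Gamma$, ce qui équivaut à trouver $\gamma_0 = e, \gamma_1, \ldots, \gamma_k \in \Gamma$ en classes distinctes tels que $\mu\!\left(\bigcap_{i=0}^{k} \gamma_i^{-1}B\right) > 0$. L'outil naturel est un argument de récurrence à la Poincaré appliqué au sous-groupe stabilisateur $\mathrm{Stab}_\Gamma(e_0)$ : ce dernier est discret infini, donc d'adhérence non compacte dans $SO^\circ(3, 19)$, et une variante du théorème de Howe-Moore garantit l'absence de vecteur invariant non trivial dans la représentation unitaire $L^2(\mathscr{D}, \mu)$. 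Traduite en termes mesurables, cette non-invariance permettra de produire, pour tout $k$, suffisamment de translatés de $B$ par des classes distinctes de $\mathrm{Stab}_\Gamma(e_0) \backslash \Gamma$ ayant une intersection commune de mesure positive avec $B$.

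Le principal obstacle technique sera de rendre rigoureux ce dernier point dans un cadre de mesure infinie (la mesure $\mu$ sur $\mathscr{D}$ n'étant pas finie, le théorème de Howe-Moore ne contrôle directement que les fonctions $L^2$) ; il faudra pour cela procéder par localisation à un borélien de mesure finie rencontrant $B$, puis exploiter la $\Gamma$-invariance de $\mu$ pour redistribuer les translatés. La seconde assertion de la proposition — l'existence d'une infinité d'HLPL à groupes d'homologie distincts dans toute surface K3 marquée $(X, \phi)$ de période $\xi$ générique — se déduira alors immédiatement de la proposition \ref{prop:equiv-homologie}, puisque à chaque élément de $\mathscr{H}(\xi)$ correspond une HLPL $L \subset X$ avec $\phi(h(L)) \in \mathscr{H}(\xi)$, les éléments distincts fournissant des HLPL de groupes $h(L)$ distincts.
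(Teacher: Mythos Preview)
Your reduction is sound up to the point where you need, for each $k$, elements $\gamma_0,\ldots,\gamma_k$ in distinct classes of $S\backslash\Gamma$ (with $S=\mathrm{Stab}_\Gamma(e_0)$) such that $\mu\bigl(\bigcap_i \gamma_i^{-1}B\bigr)>0$. That step is the whole difficulty, and neither tool you invoke settles it. Poincar\'e recurrence requires a finite invariant measure, while $\mu(\mathscr{D})=\infty$. The Howe--Moore statement you cite --- no nonzero $S$-invariant vector in $L^2(\mathscr{D})$ --- is correct (since $L^2(G/H)$ carries no $G$-invariant vector and $S$ is discrete infinite, hence non-compact), but it only contradicts an $S$-invariant function that actually lies in $L^2$. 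The natural candidate is the indicator of a level set $\{\mathscr H=E_0\}$; you have no a priori reason for this set to have finite measure, and in an infinite-measure homogeneous space nothing formal forbids a countable $\Gamma$-equivariant partition into pieces all of infinite measure. Showing that this does not occur here is exactly the missing ingredient, and your paragraph on ``localisation'' does not indicate how to produce it.

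The paper takes a shorter, more direct route and supplies precisely that ingredient. It fixes a finite $E\subset\mathscr E$, considers $X=\{\xi:\mathscr H(\xi)=E\}$, and observes that each $\gamma\in\Gamma$ either fixes $X$ (if $\gamma E=E$) or sends it to a disjoint set. This ``tiling'' property is the hypothesis of Proposition~\ref{prop-pavage}, a genuine strengthening of ergodicity proved immediately afterwards via a Mautner-phenomenon argument, whose conclusion is that such a tile must be null or conull. If conull, $\Gamma$ stabilises $E$, hence a finite-index subgroup fixes every element of $E$; since no nonzero sublattice has finite $\Gamma$-orbit this forces $E=\varnothing$, contradicting $\mu(B)>0$. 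A countable union over finite $E$ concludes. If you try to push your own plan through, already the case $k=1$ reduces to showing that $B':=\bigcup_{s\in S}sB$ is not a nontrivial $\Gamma$-tile, which is again Proposition~\ref{prop-pavage}; so both routes ultimately rest on the same lemma, which your plan does not provide.
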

\begin{proof}
Soit $E \subset \mathscr{E}$ un ensemble fini. Soit $X = \left\{\xi \in \mathscr{D}\,:\,\mathscr{H}(\xi) = E\right\}$. C'est un ensemble mesurable de $\mathscr{D}$ tel que si $\gamma \in \Gamma$, on a ou bien $X = \gamma X$, ou bien $X$ et $\gamma X$ sont disjoints, suivant que l'on a $\gamma E = E$ ou non. Ceci entraîne que $X$ est de mesure nulle ou pleine, par la proposition \ref{prop-pavage} ci-dessous et le fait que l'action de $\Gamma$ sur $\mathscr{D}$ s'identifie à l'action par translation d'un réseau sur $SO^\circ(3, 19)/\left(SO(2)\times SO(1, 19)\right)$.

Or si $X$ est de mesure pleine, on a $\gamma E = E$ pour tout $\gamma \in \Gamma$. Comme $E$ est fini, il existe un sous-groupe d'indice fini de $\Gamma$ qui fixe tous les éléments de $E$. Ceci entraîne que $E$ est vide. Or dans ce cas $X$ n'est pas de mesure pleine puisqu'il ne contient pas le borélien $B$. Dans tous les cas, $X$ est de mesure nulle. Par réunion dénombrable, l'ensemble des $\xi$ tels que $H(\xi)$ est fini est de mesure nulle.
\end{proof}

\begin{prop}
Soit $P$ une partie borélienne de $\mathscr{D}$ telle que pour tous $\gamma_1, \gamma_2 \in \Gamma$, ou bien $\gamma_1P = \gamma_2P$, ou bien $\gamma_1P \cap \gamma_2P = \varnothing$, à des ensembles de mesure nulle près.
Alors $P$ est de mesure nulle ou pleine.
\label{prop-pavage}
\end{prop}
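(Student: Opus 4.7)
The plan is to study the stabilizer $\Gamma_0 := \{\gamma \in \Gamma : \gamma P = P \text{ mod null}\}$, a subgroup of $\Gamma$. The hypothesis then reads: the distinct translates $\gamma P$ are indexed mod null by $\Gamma / \Gamma_0$ and are pairwise disjoint. I would first apply Howe-Moore (Theorem \ref{howe-moore}) to the $\Gamma$-invariant set $\bigcup_{\gamma} \gamma P$, which shows that it is null or conull. If null, then so is $P$. Otherwise, mod null, $\mathscr{D} = \bigsqcup_{[\gamma] \in \Gamma/\Gamma_0} \gamma P$, and the aim becomes to show that $P$ is null unless $\Gamma_0 = \Gamma$.

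I would then split according to the index $[\Gamma : \Gamma_0]$. In the finite-index case, $\Gamma_0$ is itself a lattice in $G = SO^\circ(3, 19)$ (a finite-index subgroup of a lattice is still a lattice), so Howe-Moore applied to $\Gamma_0$ gives ergodicity of the $\Gamma_0$-action on $\mathscr{D}$; hence $P$, being $\Gamma_0$-invariant, is null or conull. If $P$ were conull with $\Gamma_0 \neq \Gamma$, any $\gamma \notin \Gamma_0$ would yield a translate $\gamma P$ that is conull and yet disjoint from $P$, which is impossible. So $P$ is null unless $\Gamma_0 = \Gamma$, in which case $P$ is conull.

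The main obstacle is the infinite-index case. I would handle it through an extension of Howe-Moore — the Mautner phenomenon — which asserts that any closed subgroup of $G$ with non-compact closure acts ergodically on the $G$-space $(\mathscr{D}, \mu)$. Applied to $\Gamma_0$ when it is infinite (hence has non-compact closure as a discrete subgroup of $G$), this yields ergodicity of $\Gamma_0$ on $\mathscr{D}$, and the disjointness argument above then gives $P$ null. The only remaining situation is $\Gamma_0$ finite with $\mu(P) > 0$; I would rule this out by the conservativity of the $\Gamma$-action on $\mathscr{D} = G/H$, obtained by Mackey-type duality with the probability-measure-preserving $H$-action on the finite-volume quotient $\Gamma \backslash G$, which is automatically conservative. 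Conservativity forces almost every point of a positive-measure subset of $P$ to return to $P$ infinitely often under $\Gamma$; but by hypothesis only elements of $\Gamma_0$ can effect such a return, so its finiteness is the desired contradiction.
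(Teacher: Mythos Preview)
Your finite-index argument and the conservativity argument ruling out finite $\Gamma_0$ are both correct. The gap lies in the infinite-index case with $\Gamma_0$ infinite. The assertion that ``the Mautner phenomenon asserts that any closed subgroup of $G$ with non-compact closure acts ergodically on $(\mathscr{D},\mu)$'' is not what Howe--Moore or Mautner actually gives. What Howe--Moore yields (via vanishing of matrix coefficients) is that in any unitary $G$-representation without invariant vectors, no non-compact closed subgroup has a nonzero invariant vector. Applied to $L^2(\mathscr{D})$ this only says that there is no $\Gamma_0$-invariant subset of $\mathscr{D}$ of \emph{positive finite} measure. But the $G$-invariant measure on $\mathscr{D}=G/H$ is infinite --- $H=SO(2)\times SO(1,19)$ is a proper algebraic subgroup, hence not Zariski dense, so by Borel density it cannot have finite covolume in $G$ --- and the scenario $\mu(P)=\mu(P^c)=\infty$ with $[\Gamma:\Gamma_0]=\infty$ is precisely what remains. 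Dually, ergodicity of $\Gamma_0$ on $G/H$ would mean ergodicity of $H$ on $\Gamma_0\backslash G$; when $\Gamma_0$ is not a lattice this is an infinite-measure $H$-space to which Howe--Moore does not apply.

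The paper closes this gap by a different mechanism: instead of the left stabilizer $\Gamma_0\subset\Gamma$, it studies the \emph{right} stabilizer $S=\{g\in G:\ Pg=P\text{ mod null}\}$ of the lift of $P$ to $G$. Since $P$ is pulled back from $G/H$ one has $H\subset S$, so $S$ contains a hyperbolic element $a=\exp X$. A continuity lemma (for $g\mapsto\nu(A(g))$, computed on the \emph{finite-volume} quotient $\Gamma\backslash G$) shows $S$ is closed, and then the genuine Mautner phenomenon --- if $a\in S$ and $a^n g a^{-n}\to e$ then $g\in S$ --- forces the Lie algebra $\mathfrak{s}$ to contain the stable and unstable eigenspaces of $\mathrm{ad}\,X$, which generate $\mathfrak{g}$ by simplicity. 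Hence $S=G$ and $P$ is conull. The essential difference is that the non-compactness of $H$ is exploited through the right $G$-action, which provides a full Lie group of symmetries; your route through $\Gamma_0\subset\Gamma$ only produces a discrete subgroup, and a non-lattice discrete subgroup is generally too small to act ergodically on the infinite-measure space $\mathscr{D}$.
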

Cette proposition est une conséquence de la suivante, plus générale.
\begin{prop}
Soit $G$ un groupe de Lie connexe presque simple à centre fini. Soit $\mu$ une mesure de Haar de $G$ et $\Gamma$ un réseau de $G$.

Soit $H$ un sous-groupe d'adhérence non compacte de $G$ contenant un élément hyperbolique.

Soit $P$ une partie borélienne de $G$, invariante par $H$ à droite et telle que pour tous $\gamma_1, \gamma_2 \in \Gamma$, ou bien $\gamma_1P = \gamma_2P$, ou bien $\gamma_1P \cap \gamma_2P = \varnothing$, à des ensembles de mesure nulle près.
Alors ou bien $P$ est de mesure nulle, ou bien $P$ est de mesure pleine. 
\end{prop}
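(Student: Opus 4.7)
Le plan est de commencer par descendre à $\Gamma\backslash G$ pour y appliquer le théorème de Howe--Moore, puis d'utiliser l'élément hyperbolique de $H$ pour remonter la conclusion à $P$ lui-même. Notons $\pi\colon G\to\Gamma\backslash G$ la projection, $\tilde P:=\bigcup_{\gamma\in\Gamma}\gamma P$ et $\bar P:=\pi(P)\subset\Gamma\backslash G$. L'hypothèse de pavage donne $\tilde P=\pi^{-1}(\bar P)$ à un négligeable près, et $\bar P$ est invariant à droite par $H$ puisque $P$ l'est. L'adhérence de $H$ étant non compacte, l'action à droite de $H$ sur $\Gamma\backslash G$ est ergodique par Howe--Moore (théorème~\ref{howe-moore}), donc $\bar P$ est de mesure nulle ou pleine. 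Dans le premier cas, $\tilde P$ et donc $P$ sont négligeables ; on se place dans la suite dans le cas où $\tilde P$ est de mesure pleine dans $G$.

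Posons $\Gamma_0:=\{\gamma\in\Gamma\,:\,\gamma P=P\text{ à un négligeable près}\}$, sous-groupe de $\Gamma$. La décomposition $\tilde P=\bigsqcup_{[\gamma]\in\Gamma/\Gamma_0}\gamma P$ et l'invariance à gauche de $\mu$ (qui donne $\mu(\gamma P)=\mu(P)$) entraînent $\mu(G\setminus P)=(\lvert\Gamma/\Gamma_0\rvert-1)\mu(P)$ ; donc $P$ est de mesure pleine si et seulement si $\Gamma_0=\Gamma$ ou $\mu(P)=0$. Il suffit ainsi de montrer que $\mu(P)>0$ entraîne $\Gamma_0=\Gamma$. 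Si $[\Gamma:\Gamma_0]$ est fini, $\Gamma_0$ est un réseau de $G$, et une seconde application de Howe--Moore à l'image $P_0\subset\Gamma_0\backslash G$ de $P$ (bien définie par $\Gamma_0$-invariance à gauche) force $P_0$, et donc $P$, à être de mesure nulle ou pleine ; comme $\mu(P)>0$, $P$ est de mesure pleine, et la disjonction des translatés $\gamma P$ pour $[\gamma]\neq [e]$ impose alors $[\Gamma:\Gamma_0]=1$.

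Il reste à exclure le cas $[\Gamma:\Gamma_0]=\infty$ avec $\mu(P)>0$. Pour tout compact $K\subset G$, la disjonction et la co-négligeabilité de $\tilde P$ donnent
\[\sum_{[\gamma]\in\Gamma/\Gamma_0}\mu(\gamma^{-1}K\cap P)=\mu(K\cap\tilde P)=\mu(K)<\infty,\]
donc $\mu(\gamma_n^{-1}K\cap P)\to 0$ le long de toute suite injective de classes $([\gamma_n])$. L'invariance $Ph^m=P$ et la préservation de $\mu$ par translation à droite donnent plus généralement $\mu(\gamma_n^{-1}Kh^{-m_n}\cap P)\to 0$ pour toute suite $(m_n)$. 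On cherche alors à contredire cette convergence par récurrence de Poincaré du flot engendré par $h$ sur $\Gamma\backslash G$ --- ergodique car l'adhérence de $\langle h\rangle$ est non compacte, $h$ étant hyperbolique --- : en un point de densité $x_0$ de $P$, on souhaite produire une infinité de paires $(\gamma_n,m_n)$ avec $[\gamma_n]$ deux à deux distinctes et $\gamma_n^{-1}Kh^{-m_n}$ contenant un voisinage fixé de $x_0$, la propriété de densité fournissant alors $\mu(\gamma_n^{-1}Kh^{-m_n}\cap P)\geq c>0$ et la contradiction voulue. L'obstacle principal est cette étape géométrique : contrôler la forme des translatés $\gamma^{-1}Kh^{-m}$ (qui ne sont pas des boules invariantes), et garantir que les $\gamma_n$ obtenus par récurrence sont effectivement dans des classes distinctes modulo $\Gamma_0$. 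C'est précisément l'hyperbolicité de $h$, à travers la dilatation/contraction qu'il induit sur les sous-groupes unipotents stable et instable, qui rend cette construction possible et dépasse le cadre de la seule application de Howe--Moore.
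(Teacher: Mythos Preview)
Votre réduction initiale (passage à $\Gamma\backslash G$ via Howe--Moore, puis traitement du cas $[\Gamma:\Gamma_0]<\infty$ par une seconde application de Howe--Moore à $\Gamma_0$) est correcte. En revanche, le cas $[\Gamma:\Gamma_0]=\infty$ n'est pas traité : vous reconnaissez vous-même deux obstacles (contrôle de la forme de $\gamma^{-1}Kh^{-m}$ pour qu'il contienne un voisinage \emph{fixé}, et garantie que les $\gamma_n$ issus de la récurrence tombent dans des classes distinctes modulo $\Gamma_0$), et vous les renvoyez à « l'hyperbolicité de $h$ » sans rien démontrer. Ces obstacles sont réels. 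L'argument de point de densité exige un contrôle de forme que la seule bi-invariance de $\mu$ ne fournit pas, et la récurrence de Poincaré pour le flot de $h$ sur $\Gamma\backslash G$ ne dit rien sur la classe de $\gamma_n$ dans $\Gamma/\Gamma_0$ : rien n'empêche a priori que tous les retours se fassent via $\Gamma_0$. En l'état, la preuve est donc incomplète.

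L'article évite entièrement cette difficulté en travaillant avec le stabilisateur \emph{à droite} $S:=\{g\in G\,:\,Pg=P\text{ à un négligeable près}\}$, plutôt qu'avec le sous-groupe $\Gamma_0\subset\Gamma$ à gauche. On montre que $g\mapsto \nu\bigl(\pi\bigl(\bigcup_\gamma \gamma(P\cap Pg)\bigr)\bigr)$ est continue, donc $S$ est fermé ; il contient $H$, donc l'élément hyperbolique $a=\exp(X)$. Le phénomène de Mautner (si $a\in S$ et $a^nga^{-n}\to 1$, alors $g\in S$, car $\nu(A(g))=\nu(A(a^nga^{-n}))\to\nu(\Gamma\backslash G)$) force alors $\exp(\mathfrak g^{\pm})\subset S$, où $\mathfrak g^{\pm}$ sont les espaces propres de $\mathrm{ad}\,X$ de valeurs propres non nulles. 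Comme $\mathfrak g^{+}\cup\mathfrak g^{-}$ engendre $\mathfrak g$ par simplicité, on obtient $S=G$, d'où $P=Pg$ pour tout $g$, donc $P$ est de mesure pleine. C'est la même idée dynamique (exploiter la contraction/dilatation induite par $a$), mais appliquée au bon objet : le stabilisateur à droite, qui est un sous-groupe de Lie fermé, plutôt que la combinatoire des translatés par $\Gamma$.
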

Ici, on dit qu'un élément $a$ est \emph{hyperbolique} s'il existe $X \in \mathfrak{g}$ avec $\mathrm{ad}\,X$ diagonalisable sur $\R$ tel que $a = \exp(X)$.
\begin{proof}
Soit $B$ le complémentaire de la réunion des $\gamma P$, pour $\gamma \in\Gamma$. C'est un ensemble invariant par $\Gamma$ à gauche et par $H$ à droite, à des ensembles de mesure nulle près, donc il est de mesure nulle ou pleine. Si $B$ est de mesure pleine, alors $P$ est de mesure nulle. Dans la suite, on suppose donc que $B$ est de mesure nulle.

On note $\mu$ une mesure de Haar à gauche sur $G$.  La mesure de Haar invariante à gauche est aussi invariante à droite, de sorte qu'on parle simplement de mesure de Haar sans plus de précision. On note $\nu$ la mesure invariante par $G$ à droite sur $\Gamma\backslash G$ associée à $\mu$.
Pour tout $g \in G$, soit $\tilde{A}(g) := \bigcup_{\gamma \in \Gamma} \gamma(P\cap Pg)$ et $A(g)$ l'image de $\tilde{A}(g)$ dans $\Gamma\backslash G$. Enfin, soit 
\begin{align*}S &:= \{g \in G\,:\,\nu(A(g))= \nu(\Gamma\backslash G)\}\\
&= \{g \in G\,:\,\mu(G\setminus\tilde{A}(g)) = 0\}\\
&= \{g\in G\,:\,P = Pg \text{ à des ensembles de mesure nulle près }\}.\end{align*}

De la dernière égalité on déduit le fait suivant :

\begin{fait}
\label{fait-inv}
L'ensemble $S$ est un sous-groupe de $G$. De plus, si $g \in G$ et $h \in S$, on a $A(hg) = A(g)$ et $A(g)h = A(gh)$ à des ensembles de mesure nulle près.
\end{fait}  

\begin{lem}
L'application $g \mapsto \nu(A(g))$ est continue. En particulier, le sous-groupe $S$ de $G$ est fermé.
\end{lem}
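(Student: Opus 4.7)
Plan de preuve :

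Le but est de montrer que l'application $g\mapsto \nu(A(g))$ est continue sur $G$. La fermeture de $S$ en découlera immédiatement : $S = \{g\in G\,:\,\nu(A(g)) = \nu(\Gamma\backslash G)\}$ sera la préimage d'un singleton par une application continue à valeurs réelles.

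La première étape est d'exprimer $\nu(A(g))$ comme une intégrale par rapport à la mesure de Haar $\mu$. On note d'abord que $\tilde{A}(g) = \Gamma\cdot(P\cap Pg)$, si bien que, pour un domaine fondamental borélien $F\subset G$ pour $\Gamma$ agissant à gauche (avec $\mu(F) = \nu(\Gamma\backslash G) < \infty$), on a $\nu(A(g)) = \mu(F\cap \tilde{A}(g))$. Posons $\Gamma_0 := \{\gamma\in\Gamma\,:\,\gamma P = P \text{ à un ensemble de mesure nulle près}\}$ le stabilisateur de $P$. L'hypothèse de pavage presque sûr de $\{\gamma P\}_{\gamma\in\Gamma}$ entraîne que les ensembles $\gamma(P\cap Pg)$ sont presque disjoints lorsque $\gamma$ parcourt un système de représentants de $\Gamma/\Gamma_0$. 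Combinée au pavage $\{\gamma^{-1}F\}_\gamma$ de $G$ et à l'invariance de $\mu$ (le groupe $G$ étant unimodulaire en tant que groupe de Lie connexe presque simple), cette observation fournit l'identité clé
\[\mu(P\cap Pg) = \sum_{\gamma\in\Gamma}\mu(\gamma^{-1}F\cap P\cap Pg) = |\Gamma_0|\cdot\nu(A(g)).\]

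Dans le cas où $\Gamma_0$ est fini, on a $\mu(P) \leq |\Gamma_0|\mu(F) < \infty$, donc $\mathbf{1}_P\in L^2(G, \mu)$. On peut alors écrire
\[\mu(P\cap Pg) = \int_G \mathbf{1}_P(x)\,\mathbf{1}_P(xg^{-1})\,d\mu(x) = \langle\mathbf{1}_P, R_{g^{-1}}\mathbf{1}_P\rangle_{L^2(\mu)},\]
où $R_g$ désigne la translation à droite. La continuité forte de la représentation régulière droite sur $L^2(G)$ entraîne que $g\mapsto \langle\mathbf{1}_P, R_{g^{-1}}\mathbf{1}_P\rangle$ est continue, d'où la continuité cherchée.

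Dans le cas général où $\Gamma_0$ peut être infini, on se ramène au cas fini en passant au quotient $\Gamma_0\backslash G$ : le sous-ensemble $P$ étant essentiellement $\Gamma_0$-invariant, il descend en $P'\subset\Gamma_0\backslash G$, et la propriété de pavage garantit que $P'$ s'injecte presque partout dans $\Gamma\backslash G$, d'où $\mu'(P') = \nu(\Gamma\backslash G) < \infty$ pour la mesure $\mu'$ induite. La même identité donne $\nu(A(g)) = \mu'(P'\cap P'g)$ et l'argument $L^2$ s'applique avec $\mathbf{1}_{P'}\in L^2(\Gamma_0\backslash G, \mu')$. Le principal point technique réside précisément dans ce traitement du stabilisateur $\Gamma_0$, qui permet dans tous les cas de ramener la continuité à la continuité forte de la translation à droite sur un espace $L^2$ de fonctions indicatrices de mesure finie.
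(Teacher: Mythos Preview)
Your proof is correct, but it is organized differently from the paper's.

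The paper does not introduce the stabilizer $\Gamma_0$ at all. Instead it fixes a set $\mathscr{X}$ of pairwise (essentially) disjoint left $\Gamma$-translates of $P$ whose union has full measure---in your language, one translate per class in $\Gamma/\Gamma_0$---and writes
\[
\nu(A(g))=\sum_{Q\in\mathscr{X}}\mu(\mathscr{F}\cap Q\cap Qg),
\]
where $\mathscr{F}$ is a Borel fundamental domain for $\Gamma$. Each summand equals $\mu\bigl(Q\cap(\mathscr{F}\cap Q)g^{-1}\bigr)$, which is continuous in $g$ by the $L^1$-continuity of right translation applied to $1_{\mathscr{F}\cap Q}$ (a function of mass $\mu(\mathscr{F}\cap Q)<\infty$); continuity of the sum then follows by dominated convergence, the dominating summable sequence being $\mu(\mathscr{F}\cap Q)$.

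Your route collapses this sum into a single matrix coefficient of the right regular representation, $\nu(A(g))=\langle \mathbf{1}_{P'},R_{g^{-1}}\mathbf{1}_{P'}\rangle_{L^2(\Gamma_0\backslash G,\mu')}$, and then invokes strong continuity on $L^2$. This is conceptually cleaner once the identity is established, but it forces you to distinguish the cases $\abs{\Gamma_0}<\infty$ and $\abs{\Gamma_0}=\infty$ and, in the latter, to check that everything descends correctly to $\Gamma_0\backslash G$ (right-invariance of $\mu'$, the essential injectivity of $P'\hookrightarrow\Gamma\backslash G$, and the equality $\nu(A(g))=\mu'(P'\cap P'g)$). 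The paper's decomposition avoids this case split entirely: each piece $\mathscr{F}\cap Q$ automatically has finite measure, so no passage to a quotient is needed. Both arguments ultimately rest on the same ingredient, namely the strong continuity of translation on $L^p$.
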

\begin{proof}
Soit $\mathscr{F} \subset G$ un domaine fondamental borélien de $\Gamma$. Soit $\mathscr{X}$ un ensemble de translatés de $P$ à gauche par $\Gamma$ tel que les éléments de $\mathscr{X}$ sont deux à deux disjoints, à ensembles de mesure nulle près, et leur réunion est de mesure pleine. On a donc $\tilde{A}(g) = \bigcup_{Q \in \mathscr{X}} (Q \cap Qg)$ à un ensemble de mesure nulle près.
Alors \[g \mapsto \nu(A(g)) = \mu\left(\mathscr{F} \cap \tilde{A}(g)\right) = \sum_{Q \in \mathscr{X}}\mu\left(\mathscr{F} \cap Q \cap Qg\right).\]

Pour tout $Q \in \mathscr{X}$, l'application $g \mapsto \mu\left(\mathscr{F} \cap Q \cap Qg\right) = \mu\left(Q \cap \left(\mathscr{F} \cap Q\right)g^{-1}\right)$ est continue. En effet, pour tous $g, h \in G$, on a
\[\left|\mu\left(Q \cap \left(\mathscr{F} \cap Q\right)g^{-1}\right) - \mu\left(Q \cap \left(\mathscr{F} \cap Q\right)h^{-1}\right)\right| \leq \norm{1_{(\mathscr{F} \cap Q)g^{-1}} - 1_{(\mathscr{F} \cap Q)h^{-1}}}_{L^1(G, \mu)},\]qui tend vers $0$ quand $g$ tend vers $h$ par un raisonnement classique par densité et invariance de $\mu$ (ici, $1_X$ désigne bien entendu la fonction indicatrice de $X$, pour tout borélien $X$).

Finalement, par convergence dominée, l'application $g \mapsto \sum_{Q \in \mathscr{X}} \mu\left(\mathscr{F} \cap Q \cap Qg^{-1}\right)$ est bien continue.
\end{proof}

Dans la suite, on note $\mathfrak{s}$ et $\mathfrak{g}$ les algèbres de Lie respectives de $S$ et $G$, avec $\mathfrak{s} \subset \mathfrak{g}$.

\begin{lem}[Phénomène de Mautner]
\label{mautner}
Soient $a \in S$ et $g\in G$ tels que $a^nga^{-n} \longrightarrow 1$ quand $n \longrightarrow \infty$. Alors $g$ appartient à $S$.
\end{lem}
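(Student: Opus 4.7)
The plan is to exploit the continuity of the map $h \mapsto \nu(A(h))$ established in the preceding lemma, combined with the invariance properties of $A$ recorded in the fait. Recall that $S = \{h \in G\,:\,\nu(A(h)) = \nu(\Gamma\backslash G)\}$, since the three descriptions of $S$ given earlier are equivalent; and $1 \in S$ because $G \setminus \tilde{A}(1) = G \setminus \bigcup_{\gamma \in \Gamma} \gamma P = B$ has measure zero by the assumption made at the start of the proof. The strategy is thus to show that $\nu(A(a^n g a^{-n})) = \nu(A(g))$ for every $n \geq 1$, and then to pass to the limit using continuity.

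To get this equality, I apply the fait twice. Since $a \in S$ and $S$ is a subgroup, $a^n \in S$ and $a^{-n} \in S$ for every $n$. First, using $A(hg') = A(g')$ with $h = a^n \in S$ and $g' = g a^{-n}$, I obtain $A(a^n g a^{-n}) = A(g a^{-n})$ up to null sets. Next, using $A(g'h) = A(g')h$ with $g' = g$ and $h = a^{-n} \in S$, I obtain $A(g a^{-n}) = A(g) a^{-n}$ up to null sets. Since $\nu$ is right-$G$-invariant by construction, $\nu(A(g) a^{-n}) = \nu(A(g))$, and combining the three identities gives
\[\nu(A(a^n g a^{-n})) = \nu(A(g)) \quad \text{pour tout } n \geq 1.\]

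To conclude, the hypothesis $a^n g a^{-n} \to 1$ and the continuity of $h \mapsto \nu(A(h))$ give $\nu(A(a^n g a^{-n})) \to \nu(A(1)) = \nu(\Gamma\backslash G)$. Comparing with the constant sequence above yields $\nu(A(g)) = \nu(\Gamma\backslash G)$, that is, $g \in S$.

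I don't expect a serious obstacle here: the argument is essentially a one-line computation once the right tools are assembled, and this is the standard Mautner-style trick in which a continuous invariant is shown to be locked to its maximum value by conjugation dynamics. The only mild subtlety is that the two invariance relations in the fait hold only up to sets of Haar measure zero, so one must implicitly work modulo null sets throughout; this is harmless for $\nu(A(\cdot))$, which is insensitive to null modifications.
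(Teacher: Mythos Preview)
Your proof is correct and follows essentially the same approach as the paper: both use the fait to show $\nu(A(a^n g a^{-n})) = \nu(A(g))$ via $A(a^n g a^{-n}) = A(g a^{-n}) = A(g)a^{-n}$ up to null sets and the right-invariance of $\nu$, then pass to the limit using continuity. You spell out the argument in more detail, including the verification that $\nu(A(1)) = \nu(\Gamma\backslash G)$, which the paper leaves implicit.
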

\begin{proof}
En effet, on a, en utilisant le fait \ref{fait-inv} :
\[\nu(A(g)) = \nu(A(g)a^{-n}) = \nu(A(a^nga^{-n})) \longrightarrow_{n \longrightarrow \infty} \nu(\Gamma\backslash G).\]
\end{proof}

On peut maintenant conclure par un raisonnement classique (voir par exemple \cite[théorème 1.6.1]{bekka2008kazhdan}).
Soit $a = \exp(X)$ un élément hyperbolique non trivial. Pour $\lambda\in \R$, soit $\mathfrak{g}^\lambda \subset \mathfrak{g}$ le sous-espace propre de $\mathrm{ad}\,X$ correspondant à la valeur propre $\lambda$. Soient $\mathfrak{g}^{+} = \bigoplus_{\lambda > 0} \mathfrak{g}^\lambda$ et $\mathfrak{g}^{-} = \bigoplus_{\lambda < 0} \mathfrak{g}^\lambda$. 

\begin{fait}
La sous-algèbre de Lie engendrée par $\mathfrak{g}^{+} \cup \mathfrak{g}^{-}$ est égale à $\mathfrak{g}$.
\end{fait}
\begin{proof}En effet, si l'on note $\mathfrak{l}$ la sous-algèbre de Lie engendrée, la relation $[\mathfrak{g}^\lambda, \mathfrak{g}^\mu] \subset \mathfrak{g}^{\lambda + \mu}$, valable pour tous $\lambda, \mu \in \R$, montre que $\mathfrak{l}$ est un idéal. On a donc $\mathfrak{l} = \mathfrak{g}$ ou $\mathfrak{l} = 0$. Dans ce dernier cas, on a $\mathrm{ad}\,X = 0$, donc $X = 0$ par simplicité, et ainsi $a = 1$, ce qui est faux. Donc $\mathfrak{l} = \mathfrak{g}$.
\end{proof}

\begin{fait}
On a $\mathfrak{g}^+ \cup \mathfrak{g}^{-} \subset \mathfrak{s}$.
\end{fait}
\begin{proof}
Soit $Y \in \mathfrak{g}^{\lambda}$ et notons $g = \exp(Y)$. Alors \[a^nga^{-n} = \exp((\mathrm{Ad}\,a^n)Y) = \exp(e^{n(\mathrm{ad}\,X)}Y) = \exp(e^{n \lambda}Y).\]
Ainsi, lorsque $n \longrightarrow \pm\infty$ (suivant le signe de $\lambda$), $a^nga^{-n} \longrightarrow e$. Par le phénomène de Mautner (lemme \ref{mautner}), on en déduit le résultat, puisque $a \in S$.
\end{proof}
Par les deux faits précédents, on a donc $\mathfrak{s} = \mathfrak{g}$, d'où $S = G$ puisque $G$ est connexe. Cela entraîne que pour tout $g \in G$, $P = Pg$ à un ensemble de mesure nulle près. Donc $P$ est de mesure pleine.

\end{proof}
\begin{rem}
Cette preuve montre qu'on ne peut pas avoir de pavage mesurable non trivial de $G/H$ dont les tuiles sont permutées par un réseau de $G$. On peut montrer que le résultat est vrai dès que $H$ n'est pas d'adhérence compacte \cite{note-pavage}.\end{rem}
\section{Densité des classes d'Ahlfors presque partout} 
\label{section:ahlfors}
\paragraph{Classes d'Ahlfors. }
Soit $L$ une hypersurface Levi-plate linéaire de $X$. Considérons le feuilletage de Cauchy-Riemann sur $L$. Il est défini par une forme différentielle réelle $\eta$ de degré $1$, unique à une constante multiplicative près, qui donne une classe dans $H^1(L, \R)$.

Or l'espace $H^1(L, \R)$ s'identifie par dualité de Poincaré à $H_2(L, \R)$, que l'on peut voir dans $H_2(X, \R)$ par inclusion.  Par dualité de Poincaré, ce groupe s'identifie à $H^2(X, \R)$. On définit donc à partir de $\eta$ une classe $\alpha$ dans $H^2(X, \R)$. 

La forme différentielle $\eta$ est unique à une constante multiplicative près, de sorte que la classe de $\alpha$ dans $\mathbb{P}(H^2(X, \R))$ est bien définie. On l'appellera \emph{classe d'Ahlfors} de $L$, et on la notera $\alpha(L)$. On appellera aussi classe d'Ahlfors un élément non-nul de cette droite.
 Notons enfin que l'on a la propriété de naturalité suivante : si $f \colon X' \to X$ est un biholomorphisme, alors $f^{-1}(L)$ est naturellement une hypersurface Levi-plate et l'on a $\alpha(f^{-1}(L)) = f^{*}\alpha(L)$.

\begin{exemple}
\label{ex-ahlfors}
On suppose que l'on a une courbe elliptique linéarisable $C$ dans une surface complexe $X$, avec la représentation $\rho$ associée au fibré normal de $C$ dans $X$. Soit $V$ un voisinage de $C$ dans $X$ biholomorphe à $C \times_\rho \mathbb{A}(r, R)$ où $r, R$ sont des réels tels que $0 < r < 1 < R$ et $L$ correspondant à $C \times_\rho \mathbb{S}^1$. On reprend les notations de l'exemple \ref{hlpl-elliptique}.

L'hypersurface $\{\mathrm{Im}\,v = 0\}$ de $\C^2/L'$ est analytiquement difféomorphe à $\R^3/\Z^3$, via l'application induite par
\begin{align*}
\R^3 &\to \C^2\\
(x, y, z) &\mapsto (\omega_1 x + \omega_2 y, z + px + qy).
\end{align*}

Le feuilletage de Cauchy-Riemann de $C\times_\rho \mathbb{S}^1$ est défini dans les coordonnées locales $(\zeta, v)$ sur $C\times_\rho \mathbb{S}^1$ par $dv = 0$. Dans les coordonnées $(x, y, z)$ sur $\R^3/\Z^3$, il est donc défini par $de^{2i\pi(z + px + qy)} = 0$, c'est-à-dire par $dz + pdx + qdy = 0$. Notons que cette forme dépend du choix de $(p, q)$, puisqu'on aurait pu choisir un autre difféomorphisme de $C \times_\rho \mathbb{S}^1$ avec $\R^3/\Z^3$.

Notons désormais $\alpha$, $\beta$ et $\gamma$ les lacets images dans $\R^3/\Z^3$ des chemins allant de $0$ à $1$ dans chacun des facteurs de $\R^3$ respectivement.
Cherchons l'image de $dz + pdx +qdy$, vu comme élément de $H^1(L, \R)$, dans $H_2(L, \R)$. On peut vérifier que l'on a pour toute forme $\omega$ de degré $2$ sur $L$
\begin{align*}
\int_{\alpha\times\beta} \omega &= \int dz \wedge \omega, \\
\int_{\beta\times\gamma} \omega &= \int dx \wedge \omega \end{align*}
et
\[\int_{\gamma\times\alpha} \omega = -\int dy\wedge \omega.
\]
donc ces trois $2$-cycles représentent les classes de cohmologie $[dz]$, $[dx]$ et $-[dy]$ via l'isomorphisme de De Rham.
On peut considérer les trois $2$-cycles $\alpha\times\beta$, $\beta\times\gamma$ et $\gamma\times\alpha$ dans $X$. Soient donc $A_{\alpha\beta}$, $A_{\beta\gamma}$ et $A_{\gamma\alpha}$ les duaux de Poincaré dans $X$ respectifs.
La classe d'Ahlfors de l'hypersurface Levi-plate linéaire $L$ est alors donnée par $A_{\alpha\beta} + pA_{\beta\gamma} - qA_{\gamma\alpha}$.

\begin{prop}
La classe d'Ahlfors de $L$ dans $X$ est d'auto-intersection nulle, et elle définit un élément de $H^{1, 1}(X, \R)$.\label{prop-classe-ahlfors}
\end{prop}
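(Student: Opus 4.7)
Le plan est de représenter concrètement la classe $\alpha(L)$. Par construction, elle est l'image de $[\eta] \in H^1(L, \R)$ par les isomorphismes de dualité de Poincaré $H^1(L, \R) \simeq H_2(L, \R)$ et $H^2(X, \R) \simeq H_2(X, \R)$, composés avec l'inclusion $H_2(L, \R) \to H_2(X, \R)$. Elle admet donc pour représentant un $2$-cycle fermé $c$ inclus dans $L$, ou de manière équivalente le $2$-courant fermé $T = \eta \wedge [L]$ sur $X$, où $[L]$ désigne le courant d'intégration sur $L$. On traitera les deux assertions séparément à partir de cette représentation.

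Pour l'auto-intersection, on utilise que $L$ est une hypersurface réelle lisse et orientable (difféomorphe à un tore de dimension $3$) de la variété orientée $X$ de dimension réelle $4$, donc son fibré normal dans $X$ est trivial. Plus généralement, l'image de $H_2(L, \R) \to H_2(X, \R)$ est totalement isotrope pour la forme d'intersection : étant donné deux cycles $c_1, c_2 \subset L$, on perturbe $c_1$ dans la direction normale via une section partout non nulle du fibré normal pour obtenir un cycle $c_1'$ cohomologue à $c_1$ et disjoint de $L$, donc de $c_2$. Comme $\alpha(L)$ provient par construction de cette image, on a $\alpha(L)^2 = 0$.

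Pour montrer que $\alpha(L)$ est de type $(1, 1)$, comme $\alpha(L)$ est réelle, il suffit de vérifier que son accouplement avec toute $(2, 0)$-forme holomorphe $\sigma$ sur $X$ s'annule. Par la dualité de Poincaré, $\int_X \alpha(L) \wedge \sigma = \int_L \eta \wedge \sigma|_L$. L'observation cruciale est que les feuilles du feuilletage de Cauchy-Riemann sont des sous-variétés complexes de dimension $1$ de $X$, sur lesquelles toute $(2, 0)$-forme est identiquement nulle (faute de deux directions holomorphes indépendantes). En coordonnées locales $(x_1, x_2, t)$ sur $L$ adaptées au feuilletage, avec $\eta = dt$ et feuilles $\{t = \mathrm{const}\}$, on en déduit que $\sigma|_L$ ne contient que des termes ayant $dt$ en facteur, et donc que $\eta \wedge \sigma|_L = 0$ ponctuellement. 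Le principal point technique sera la justification rigoureuse de l'identité $\int_X \alpha(L) \wedge \sigma = \int_L \eta \wedge \sigma|_L$, ou de manière équivalente du fait que $T = \eta \wedge [L]$ représente bien $\alpha(L)$ ; ceci revient à dérouler les définitions de la dualité de Poincaré, après quoi les deux conclusions reposent sur des arguments essentiellement géométriques.
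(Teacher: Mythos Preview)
Ton argument est correct sur les deux points, et il mérite d'être comparé à celui du papier.

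Pour l'auto-intersection, ta preuve coïncide en substance avec la seconde approche esquissée dans le papier : celui-ci remarque que l'on peut déformer $L$ en un tore disjoint dans le voisinage tubulaire $V$, ce qui est exactement ton argument de trivialité du fibré normal, spécialisé à la situation. Le papier mentionne aussi une alternative purement calculatoire (tous les produits d'intersection entre $A_{\alpha\beta}$, $A_{\beta\gamma}$ et $A_{\gamma\alpha}$ sont nuls), que tu évites.

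Pour l'appartenance à $H^{1,1}(X,\R)$, ton approche est réellement différente de celle du papier. Le papier se place dans le modèle local explicite $\C^2/L'$ de l'exemple~\ref{hlpl-elliptique} et calcule directement le type de Hodge de la classe d'Ahlfors à partir de l'expression $A_{\alpha\beta} + pA_{\beta\gamma} - qA_{\gamma\alpha}$, par un calcul analogue à celui déjà effectué. Toi, tu procèdes de façon intrinsèque : tu représentes $\alpha(L)$ par le courant $T = \eta \wedge [L]$, puis tu montres que son accouplement avec toute $(2,0)$-forme holomorphe $\sigma$ est nul, car la restriction de $\sigma$ aux feuilles (courbes complexes) s'annule, ce qui force $\sigma|_L$ à être dans l'idéal engendré par $\eta$. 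Cet argument a l'avantage de ne dépendre que de la structure Levi-plate de $L$ et non du modèle linéaire explicite ; il s'appliquerait à toute hypersurface Levi-plate compacte dont le feuilletage de Cauchy-Riemann est défini par une $1$-forme fermée. L'approche du papier, elle, est plus concrète et donne accès à l'expression explicite de la classe, ce qui est utile pour la suite (lien avec le marquage et les vecteurs $v_{(p,q)}$).
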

\begin{proof}Pour la première propriété, il suffit de remarquer que tous les produits entre $A_{\alpha\beta}$, $A_{\beta\gamma}$ et $A_{\gamma\alpha}$ sont nuls. On peut aussi le voir en notant que l'on peut déformer $L$ en un tore disjoint dans $V$.

Pour la seconde propriété, on peut de même calculer explicitement la classe d'Ahlfors en se plaçant dans un ouvert $\C^2/L'$ qui contient une copie de $L$ comme dans l'exemple \ref{hlpl-elliptique}. Cela permet de vérifier que la classe d'Ahlfors est dans $H^{1, 1}(X, \R)$. Comme le calcul est semblable à celui qui a déjà été fait, on ne le refait pas ici.
\end{proof}

\begin{rem}
La classe d'Ahlfors peut être calculée à partir de la classe de cohomologie d'un courant d'Ahlfors (voir Brunella \cite{brunella1999courbes} par exemple) défini par une courbe entière donnée par une feuille du feuilletage de Cauchy-Riemann. 
\end{rem}
\end{exemple}

\paragraph{Lien avec le marquage. }

On peut montrer un analogue de la proposition \ref{prop:equiv-homologie}, c'est-à-dire que l'on construit une application qui associe à une période l'ensemble des $\alpha(L)$, vues par le marquage, où $L$ est une HLPL d'une surface obtenue par action d'un élément du groupe $\Gamma$ sur une surface de Koike-Uehara. La preuve est essentiellement la même.
\begin{prop}Il existe une application $\mathscr{A} \colon \mathscr{D} \to \mathscr{P}(\mathbb{P}(\Lambda \otimes \R))$ qui est $\Gamma$-équivariante et telle que pour tout $\xi \in \mathscr{D}$, on a $\mathscr{A}(\xi) \subset \left\{\R v \in \mathbb{P}(\Lambda\otimes \R)\,:\,v \perp P(\xi),\,q(v) = 0\right\}$. Enfin, si $\xi \in A$, qui est de mesure pleine, $\mathscr{A}(\xi)$ est non-vide.

De plus, pour toute $\xi \in \mathscr{G}$, toute surface K3 marquée $(X, \phi)$ de période $\xi$, et tout $\alpha \in \phi_{\R}^{-1}(\mathscr{A}(\xi)) \subset \mathbb{P}(H^2(X, \R))$, il existe une hypersurface Levi-plate irrationnelle $L \subset X$ de classe d'Ahlfors $\alpha$.
\label{prop-ahlfors}
\end{prop}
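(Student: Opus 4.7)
The plan is to mirror the construction of $\mathscr{H}$ in Proposition \ref{prop:equiv-homologie}, using Ahlfors classes in place of the image $h(L)$. First, for $\xi \in B$ I take the associated Koike-Uehara marked surface $(X, \phi)$ together with one of its canonical HLPLI $L$, which is of the form $C \times_\rho \mathbb{S}^1$ inside the gluing neighbourhood. Example \ref{ex-ahlfors} gives the Ahlfors class explicitly (up to scalar) as $A_{\alpha\beta} + p\,A_{\beta\gamma} - q\,A_{\gamma\alpha}$ in the marking, so I set $A_0(\xi) := \R\,\phi_\R(\alpha(L)) \in \mathbb{P}(\Lambda \otimes \R)$. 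Then I extend to the full saturation $A = \bigcup_\gamma \gamma B$ by the same orbit trick: with $\Gamma_\xi := \{\gamma \in \Gamma\,:\,\gamma \xi \in B\}$, define
\[\mathscr{A}(\xi) := \{\gamma^{-1} A_0(\gamma \xi)\,:\,\gamma \in \Gamma_\xi\}\]
for $\xi \in A$, and $\mathscr{A}(\xi) := \varnothing$ elsewhere. This gives at once $\Gamma$-equivariance, and non-emptiness on $A$ is tautological.

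The containment $\mathscr{A}(\xi) \subset \{\R v\,:\,v \perp P(\xi),\,q(v) = 0\}$ is a $\Gamma$-invariant condition since $\Gamma$ preserves $q$ and satisfies $\gamma P(\xi) = P(\gamma \xi)$, so it suffices to verify it on $B$. For $\xi \in B$, Proposition \ref{prop-classe-ahlfors} provides the two required facts: $\alpha(L)$ has self-intersection $0$, which translates via $\phi_\R$ into $q(A_0(\xi)) = 0$, and $\alpha(L)$ lies in $H^{1,1}(X, \R)$, which under the marking corresponds to the orthogonal complement of the oriented positive plane spanned by real and imaginary parts of a period vector, i.e.\ to $P(\xi)^\perp$ inside $\Lambda \otimes \R$.

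For the last assertion, I imitate the endgame of the proof of Proposition \ref{prop:equiv-homologie}. Given $\xi \in \mathscr{G}$, a marked K3 surface $(X, \phi)$ of period $\xi$, and $\alpha \in \phi_\R^{-1}(\mathscr{A}(\xi))$, unwinding the definition produces $\gamma \in \Gamma_\xi$ and a Koike-Uehara marked surface $(Y, \psi)$ of period $\gamma \xi$ with HLPLI $L \subset Y$ such that $\psi_\R(\alpha(L)) = \gamma\,\phi_\R(\alpha)$ in $\mathbb{P}(\Lambda \otimes \R)$. Re-marking to $(Y, \gamma^{-1} \circ \psi)$, which has period $\xi \in \mathscr{G}$, Theorem \ref{torelli} supplies a biholomorphism $u \colon X \to Y$ with $\phi \circ u^* = \pm\,\gamma^{-1} \circ \psi$. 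The naturality of Ahlfors classes noted just before Example \ref{ex-ahlfors}, namely $\alpha(u^{-1}(L)) = u^* \alpha(L)$, then yields $\phi_\R(\alpha(u^{-1}(L))) = \pm\,\phi_\R(\alpha)$, hence the same line in $\mathbb{P}(H^2(X, \R))$, so $u^{-1}(L)$ is the required HLPLI.

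I do not foresee a genuine difficulty, since the entire architecture is identical to that of Proposition \ref{prop:equiv-homologie}: the only content specific to the present statement is packaged in Proposition \ref{prop-classe-ahlfors}, which says exactly that the Ahlfors classes produced by the Koike-Uehara construction land in the target isotropic subset of $\mathbb{P}(\Lambda \otimes \R)$ appearing in the statement.
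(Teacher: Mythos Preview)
Your proof is correct and follows essentially the same route as the paper: define $a(\xi)$ (your $A_0(\xi)$) on $B$ via the Ahlfors class of a canonical HLPLI, saturate by $\Gamma$ to get $\mathscr{A}$, read off equivariance and non-emptiness, deduce the isotropic containment from Proposition \ref{prop-classe-ahlfors}, and handle the last assertion exactly as in Proposition \ref{prop:equiv-homologie} via Torelli and the naturality $\alpha(u^{-1}(L)) = u^*\alpha(L)$. The paper's own proof is slightly more terse but structurally identical.
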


\begin{proof}

Pour tout $\xi \in B$, soit $(X, \phi)$ la surface de Koike-Uehara associée et $L$ une de ses hypersurfaces Levi-plates linéaires associées. On peut considérer sa classe d'Ahflors $\alpha(L) \in \mathbb{P}(H^2(X, \R))$. 

Via le marquage qui donne par tensorisation une application $\phi_{\R} \colon H^2(X, \R) \to \Lambda\otimes\R$, on en déduit un élement $a(\xi)$ de $\mathbb{P}(\Lambda \otimes \R)$ qui est orthogonal à $P(\xi)$, puisque l'on a en fait $\alpha(L) \in \mathbb{P}(H^{1, 1}(X, \R))$. 

Étant donné $\xi \in \mathscr{D}$, considérons comme plus haut l'ensemble $\Gamma_{\xi} = \{\gamma\in\Gamma\;:\;\gamma\xi\in B\}$ et notons $\mathscr{A}(\xi) = \{\gamma^{-1}a(\gamma\xi)\;:\;\gamma\in\Gamma_{\xi}\}$. La propriété d'équivariance est alors claire. La dernière partie de l'énoncé se prouve comme la proposition \ref{prop:equiv-homologie}, grâce au théorème de Torelli (théorème \ref{torelli}).

Le fait que $\mathscr{A}(\xi)$ est non vide si $\xi \in A$ est une conséquence de la définition. Quant au fait que $\mathscr{A}(\xi) \subset \left\{\R v \in \mathbb{P}(\Lambda\otimes \R)\,:\,v \perp P(\xi),\,q(v) = 0\right\}$, c'est une conséquence de la proposition \ref{prop-classe-ahlfors} (le marquage envoie le $H^{1, 1}$ dans l'orthogonal de la période, donc du plan réel associé).

\end{proof}
\begin{rem}
Reprenons les notations de l'exemple \ref{ex-ahlfors}. 
Posons pour tout $(p, q) \in \R^2$, $v_{(p, q)} := A_{\alpha\beta} + pA_{\beta\gamma} - qA_{\gamma\alpha}$. Alors si $\xi \in v_{(p, q)}^{\perp}$, on a $\alpha(L) = v_{(p, q)}$. Cela suit du calcul de l'exemple \ref{ex-ahlfors} et de la définition du marquage de Koike-Uehara \cite[section 3]{koike2019gluing}, qui est défini de telle sorte que les différentes notations $A_{\alpha\beta}$, $A_{\beta\gamma}$ et $A_{\gamma\alpha}$ soient cohérentes.
\end{rem}

\paragraph{Flot géodésique. } Dans ce paragraphe, on montre la proposition \ref{coro-ahlfors}, en adaptant un argument de Ghys \cite{ghys1995topologie}, repris également par Martinez-Matsumoto-Verjovsky \cite{martinez}, qui est très proche de l'argument donné ici. On utilise cependant le flot géodésique et non le mouvement brownien, plus simple dans le cadre plus restreint qui est le nôtre.
\begin{defn}Étant donné $\xi \in \mathscr{D}$, soient \[\tilde{\mathscr{F}}_\xi := \left\{\R v \in \mathbb{P}(\Lambda\otimes \R)\,:\,v \perp P(\xi),\,q(v) > 0\right\}\]et
\[\tilde{M} = \left\{(\xi, p) \in \mathscr{D}\times \mathbb{P}(\Lambda\otimes\R)\,:\,p \in \tilde{\mathscr{F}}_\xi\right\}.\]
\end{defn}

\begin{fait}Pour tout $\xi \in \mathscr{D}$, $\tilde{\mathscr{F}}_\xi$ est un espace hyperbolique de dimension $19$, dans le modèle projectif, associé à la forme quadratique $q$.

Ceci définit un feuilletage $\tilde{\mathscr{F}}$ de $\tilde{M}$ dont les feuilles sont les $\tilde{\mathscr{F}}_\xi$, $\xi \in \mathscr{D}$.\end{fait} Remarquons qu'avec cette convention, la métrique riemannienne associée sur l'espace tangent à $\R v$, $v \in \Lambda\otimes\R$, identifié à $v^\perp$, est induite par $-q$.

Par la proposition \ref{prop-ahlfors}, on a :
\begin{fait}
Pour tout $\xi \in \mathscr{D}$, on a $\mathscr{A}(\xi) \subset \partial \tilde{\mathscr{F}}_\xi$, où l'on a noté $\partial \tilde{\mathscr{F}}_\xi$ le bord de l'espace hyperbolique $\tilde{\mathscr{F}}_\xi$, identifié à $\left\{\R v \in \mathbb{P}(\Lambda\otimes \R)\,:\,v \perp P(\xi),\,q(v) = 0\right\}$.
\end{fait}

\begin{lem}
\label{lem-mes-conv}
Pour tout $r > 0$, l'ensemble
\[C^r := \{(\xi, p)\in \tilde{M}\,:\,d(p, \mathrm{Conv}(\overline{\mathscr{A}(\xi)}) \leq r\}\]
est un borélien de mesure strictement positive.

Ici, pour $\xi \in \mathscr{D}$, on a noté $d$ la distance dans l'espace hyperbolique $\tilde{\mathscr{F}}_\xi$ et, si $Q \subset \partial \tilde{\mathscr{F}}_\xi$, alors $\mathrm{Conv}(Q)$ désigne l'enveloppe convexe de $Q$ dans $\tilde{\mathscr{F}}_\xi \cup \partial \tilde{\mathscr{F}}_\xi$ (qui est un espace hyperbolique compactifié).
\end{lem}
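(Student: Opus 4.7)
The plan decomposes the argument into two tasks: measurability of $C^r$, and strict positivity of its measure.

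For measurability, I would exploit the explicit description of $\mathscr{A}$ given in the proof of Proposition \ref{prop-ahlfors}: the set $\{(\xi, v) \in \mathscr{D} \times \mathbb{P}(\Lambda \otimes \R) : v \in \mathscr{A}(\xi)\}$ is the countable union over $\gamma \in \Gamma$ of the Borel graphs of the measurable maps $\xi \mapsto \gamma^{-1} a(\gamma \xi)$, restricted to $\{\xi : \gamma \xi \in B\}$. Since the closed convex hull inside $\tilde{\mathscr{F}}_\xi \cup \partial \tilde{\mathscr{F}}_\xi$ and the distance to a closed set both preserve Borel measurability when applied fiberwise to a Borel family of subsets, $C^r$ is Borel.

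The crux of the positive-measure statement is the claim that for almost every $\xi \in \mathscr{D}$, $\mathscr{A}(\xi)$ contains at least two distinct boundary points. To prove this I would argue exactly as in Proposition \ref{infinite-hlpl}: the cardinality function $\xi \mapsto |\mathscr{A}(\xi)| \in \{0,1,\dots,\infty\}$ is $\Gamma$-invariant, hence almost everywhere constant equal to some $k$ by the Howe-Moore theorem (Theorem \ref{howe-moore}); Proposition \ref{prop-ahlfors} already gives $k \geq 1$. If $k$ were finite, then for each finite $E \subset \mathbb{P}(\Lambda \otimes \R)$ of cardinality $k$ the level set $\{\xi : \mathscr{A}(\xi) = E\}$ would be a $\Gamma$-pavage in the sense of Proposition \ref{prop-pavage}, hence of measure zero or full; the full-measure case would force $E$ to be fixed by a finite-index subgroup of $\Gamma$, which contradicts the Koike-Uehara description $a(\xi) = A_{\alpha\beta} + pA_{\beta\gamma} - qA_{\gamma\alpha}$ with $(p,q) \in \mathrm{Dioph}$ varying continuously on $B$. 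Hence $k = \infty$.

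Once $\overline{\mathscr{A}(\xi)}$ contains two distinct boundary points of $\tilde{\mathscr{F}}_\xi$, its convex hull contains the geodesic joining them, and the $r$-tubular neighborhood of such a geodesic has positive hyperbolic volume in the $19$-dimensional space $\tilde{\mathscr{F}}_\xi$ for every $r > 0$. Equipping $\tilde{M}$ with the natural fibered measure (the measure on $\mathscr{D}$ integrated against hyperbolic volume on fibers), Fubini yields that $C^r$ has strictly positive measure.

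The main obstacle is the step showing $|\mathscr{A}(\xi)| = \infty$ almost everywhere: one has to adapt the pavage-plus-ergodicity argument of Proposition \ref{infinite-hlpl}, which dealt with the constant value $H \equiv \Z A_{\alpha\beta} \oplus \Z A_{\beta\gamma} \oplus \Z A_{\gamma\alpha}$, to the continuously varying family $\{a(\xi) : \xi \in B\}$. A measurable selection is needed to interpret "level sets" when $k < \infty$, and the non-constancy of $a$ has to be leveraged to rule out a finite $\Gamma$-invariant set of values. The remaining ingredients (measurability and the hyperbolic volume estimate) are then essentially routine.
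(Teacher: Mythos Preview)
Your two-part outline matches the paper's. For measurability, however, the paper takes a more concrete route than your abstract fiberwise argument: it writes the $r$-neighbourhood of $\mathrm{Conv}(\overline{\mathscr{A}(\xi)})$ as the intersection, over a countable dense family $D\subset(\Lambda\otimes\R)^*$, of the $r$-neighbourhoods of those open half-spaces of $\tilde{\mathscr{F}}_\xi$ determined by $\ell\in D$ that contain $\mathscr{A}(\xi)$. Each such set is visibly Borel in $(\xi,p)$ because the condition ``this half-space contains $\mathscr{A}(\xi)$'' unwinds to a countable conjunction over $\gamma\in\Gamma$ of Borel conditions on $a(\gamma\xi)$. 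This sidesteps any appeal to general measurable-selection machinery.

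For positive measure the paper makes exactly your reduction to $|\mathscr{A}(\xi)|\geq 2$ and then simply invokes Proposition~\ref{infinite-hlpl}. The obstacle you flag --- that $a$ varies on $B$, so the level sets of $\mathscr{A}$ range over an uncountable family and a direct pavage argument on $\mathscr{A}$ breaks down --- is real, but it dissolves once you route through $\mathscr{H}$ rather than rerun the pavage argument. For almost every $\xi$ and every $\gamma\in\Gamma_\xi$ the triple $(1,p,q)$ with $(p,q)=V(\gamma\xi)$ is $\Q$-linearly independent (the exceptional set in $\R^2$ is a countable union of lines, and $V$ is a submersion), and then the smallest rational subspace of $\Lambda\otimes\R$ containing $\gamma^{-1}a(\gamma\xi)=\R\,\gamma^{-1}v_{(p,q)}$ is exactly $\gamma^{-1}H\otimes\R$. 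This yields a well-defined surjection $\mathscr{A}(\xi)\to\mathscr{H}(\xi)$, hence $|\mathscr{A}(\xi)|\geq|\mathscr{H}(\xi)|=\infty$ by Proposition~\ref{infinite-hlpl}; no new ergodicity or selection argument is required.
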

Soit $\tilde{N} := \{(\xi, p, v)) \in \tilde{M} \times \left(\Lambda\otimes\R\right)\,:\, v \in T_p\tilde{\mathscr{F}}_\xi, -q(v) = 1\}$ : c'est le fibré tangent unitaire au feuilletage $\tilde{\mathscr{F}}$ de $\tilde{M}$. Notons $\pi \colon \tilde{N} \to \tilde{M}$ la projection associée.

Pour tout $(\xi, x, v) \in \tilde{N}$ et $t \in \R$, soit $\phi_t(\xi, x, v) = (\xi, x', v')$ où $(x', v')$ est obtenu en suivant pendant le temps $t$ la géodésique dans $\tilde{\mathscr{F}}_\xi$ partant de $x$ et de vecteur tangent $v$ au temps zéro, dans $\tilde{\mathscr{F}}_\xi$. On appellera $(\phi_t)_{t\in\R}$ le \emph{flot géodésique} sur $\tilde{N}$. Il passe au quotient en un flot encore appelé \emph{flot géodésique} et noté aussi $(\phi_t)_{t\in \R}$ sur $\Gamma\backslash \tilde{N}$.

Notons que $SO^\circ(q)$ agit transitivement sur $\tilde{N}$. De plus, on peut identifier $SO^\circ(q)$ à $SO^\circ(3, 19)$ de sorte que le stabilisateur d'un point soit $SO(2) \times SO(18)$ via le plongement
\[(A, B) \in SO(2) \times SO(18) \mapsto \begin{pmatrix}A & 0 & 0 \\ 0 & I_2 & 0 \\ 0 & 0 & B\end{pmatrix}.\] On identifie donc $\tilde{N}$ à $SO^\circ(3, 19)/(SO(2) \times SO(18))$. 
Alors $\phi_t$ correspond à la multiplication à droite par
\[\begin{pmatrix}I_2 & 0 & 0 \\ 0 & \theta(t) & 0 \\ 0 & 0 & I_{18}\end{pmatrix},\]
où $\theta(t) = \begin{pmatrix}\cosh(t) & \sinh(t) \\ \sinh(t) & \cosh(t)\end{pmatrix}$. 

On peut munir $\tilde{N}$ de la mesure induite par la mesure de Haar, $SO(2) \times SO(18)$ étant compact ; cela définit une mesure absolument continue qui est invariante à droite par le flot géodésique, puisqu'il est défini par l'action à droite d'un sous-groupe à un paramètre commutant avec $SO(2) \times SO(18)$. Cette mesure est par ailleurs invariante à gauche ; elle définit donc une mesure sur $\Gamma\backslash \tilde{N}$. De plus le théorème de Howe-Moore \ref{howe-moore} entraîne que l'on a :
\begin{lem}Le flot géodésique sur $\Gamma\backslash \tilde{N}$ est ergodique.\end{lem}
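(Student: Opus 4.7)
Le plan consiste à appliquer directement le théorème de Howe-Moore (théorème \ref{howe-moore}) avec $G := SO^\circ(3, 19)$, le réseau $\Gamma$ et le sous-groupe $H := AK$, où $K := SO(2) \times SO(18)$ est le stabilisateur d'un point de $\tilde N$ et
\[A := \left\{\begin{pmatrix}I_2 & 0 & 0 \\ 0 & \theta(t) & 0 \\ 0 & 0 & I_{18}\end{pmatrix}\,:\,t\in\R\right\}\]
est le sous-groupe à un paramètre qui réalise le flot géodésique par multiplication à droite sur $\tilde N = G/K$, comme expliqué juste avant l'énoncé. Les hypothèses du théorème se vérifient sans peine : $G$ est connexe, presque simple et à centre fini ; comme $K$ et $A$ commutent (précisément parce que $A$ sert à définir un flot sur $G/K$), le produit $H = AK$ est un sous-groupe fermé de $G$ ; enfin $A$ n'est pas d'adhérence compacte dans $G$, donc $H$ non plus.

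Le cœur de l'argument consiste à relever, via les projections canoniques $G \to G/K = \tilde N \to \Gamma\backslash\tilde N$, un borélien $E \subset \Gamma\backslash\tilde N$ invariant par le flot géodésique en un borélien $\tilde E \subset G$. Par construction, $\tilde E$ est invariant à gauche par $\Gamma$, invariant à droite par $K$ (car $\tilde N = G/K$), et invariant à droite par $A$ (par invariance sous le flot), donc invariant à droite par $H = AK$ tout entier. Il descend ainsi en un borélien $\bar E \subset G/H$ invariant à gauche par $\Gamma$, auquel on applique le théorème de Howe-Moore pour conclure que $\bar E$ est de mesure nulle ou pleine dans $G/H$. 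Par compatibilité des mesures entre $G/H$ et $\Gamma\backslash\tilde N$ obtenues à partir de la mesure de Haar de $G$, la même alternative vaudra pour $E$, ce qui donnera bien l'ergodicité.

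Je n'identifie pas d'obstacle conceptuel dans cette preuve : le travail essentiel est contenu dans le théorème de Howe-Moore déjà à disposition, et le seul point technique — la compatibilité des mesures sur les espaces quotients successifs par $K$, $A$ et $\Gamma$ — relève d'une application routinière du théorème de Fubini, $K$ étant compact et $\Gamma$ discret. L'observation-clé est donc la structure très simple du flot géodésique comme multiplication à droite par un sous-groupe à un paramètre non borné commutant avec le stabilisateur compact $K$ ; toute la force dynamique réside dans le fait que $A$, donc $AK$, est d'adhérence non compacte, ce qui est précisément l'hypothèse permettant d'invoquer Howe-Moore.
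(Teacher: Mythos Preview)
Your proposal is correct and is precisely the approach intended by the paper, which merely states that the lemma follows from the théorème de Howe-Moore without giving further detail. You have simply made explicit the routine reduction: identifying the geodesic flow with right multiplication by the non-compact subgroup $A$ commuting with $K$, and passing from a flow-invariant set in $\Gamma\backslash G/K$ to a $\Gamma$-invariant set in $G/(AK)$.
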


On peut désormais montrer la proposition suivante :
\begin{prop}
Pour presque tout $\xi \in \mathscr{D}$, l'ensemble $\mathscr{A}(\xi)$ est dense dans $\partial \tilde{\mathscr{F}}_\xi$.
\end{prop}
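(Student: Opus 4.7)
Le plan est de raisonner par l'absurde en utilisant l'ergodicité du flot géodésique sur $\Gamma\backslash\tilde{N}$, à la manière de Ghys \cite{ghys1995topologie}. Supposons que l'ensemble $E := \{\xi \in \mathscr{D}\,:\,\mathscr{A}(\xi) \text{ n'est pas dense dans } \partial\tilde{\mathscr{F}}_\xi\}$ soit de mesure strictement positive. Comme l'action de $\Gamma$ est isométrique fibre à fibre sur les $\tilde{\mathscr{F}}_\xi$ et $\mathscr{A}$ est $\Gamma$-équivariante, $E$ est $\Gamma$-invariant, donc de mesure pleine par l'ergodicité de l'action de $\Gamma$ sur $\mathscr{D}$. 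Pour $\xi$ dans $E \cap A$, qui est encore de mesure pleine, l'enveloppe convexe $K_\xi := \mathrm{Conv}(\overline{\mathscr{A}(\xi)})$ est un fermé convexe propre de l'espace hyperbolique $\tilde{\mathscr{F}}_\xi$.

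Je relèverais au fibré unitaire tangent en posant $D^r := \pi^{-1}(C^r) \subset \tilde{N}$, où $C^r$ est fourni par le lemme \ref{lem-mes-conv} : cet ensemble est $\Gamma$-invariant et de mesure strictement positive. Son saturé $V^r := \bigcup_{t\in\mathbb{R}} \phi_t(D^r)$ sous le flot géodésique est alors à la fois $\Gamma$-invariant et invariant par le flot, et sa projection dans $\Gamma\backslash\tilde{N}$ est de mesure strictement positive. L'ergodicité du flot géodésique, établie ci-dessus, entraîne que $V^r$ est de mesure pleine. En intersectant sur $r = 1/n$, on obtient que pour presque tout $(\xi, p, v) \in \tilde{N}$, on a $\inf_{t\in\mathbb{R}} d(\gamma_{p,v}(t), K_\xi) = 0$, où $\gamma_{p,v}$ désigne la géodésique dans $\tilde{\mathscr{F}}_\xi$ issue de $(p, v)$. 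Par Fubini, pour presque tout $\xi$, cette propriété est vérifiée par presque toute géodésique de $\tilde{\mathscr{F}}_\xi$ au sens de la mesure de Liouville.

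Le point délicat, et l'étape principale à formaliser, sera alors le lemme de géométrie hyperbolique suivant : si $K$ est un convexe fermé propre de $\tilde{\mathscr{F}}_\xi$, l'ensemble des géodésiques $\gamma$ vérifiant $\inf_t d(\gamma(t), K) = 0$ n'est pas de mesure de Liouville pleine. L'idée est qu'en séparant $K$ d'un point extérieur $p_0$ par un hyperplan totalement géodésique, puis en décalant cet hyperplan vers $p_0$, on obtient un demi-espace hyperbolique $H$ dont l'adhérence $\overline{H}$ est à distance au moins $\eps > 0$ de $K$. Les géodésiques dont les deux extrémités à l'infini sont situées dans le bord à l'infini de $H$ sont alors contenues dans $\overline{H}$ par convexité, donc à distance au moins $\eps$ de $K$. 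Comme cet ensemble est un ouvert non vide du fibré des géodésiques, il est de mesure de Liouville strictement positive, contredisant la conclusion du paragraphe précédent. On aboutit donc à une contradiction, ce qui établit la densité de $\mathscr{A}(\xi)$ pour presque tout $\xi$.
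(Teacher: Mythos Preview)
Your argument is correct and follows the same strategy as the paper --- ergodicity of the geodesic flow on $\Gamma\backslash\tilde{N}$ together with a half-space separation in $\tilde{\mathscr{F}}_\xi$ --- but the packaging differs. The paper applies a Birkhoff-type statement to $C=\Gamma\backslash\pi^{-1}(C^r)$: for almost every $(\xi,x,v)$ there are arbitrarily large $t$ with $\phi_t(\xi,x,v)\in C$, and a geodesic that returns infinitely often to the $r$-neighbourhood of $\mathrm{Conv}(\overline{\mathscr{A}(\xi)})$ must have its forward endpoint in $\overline{\mathscr{A}(\xi)}$, giving density directly. You instead saturate $D^r$ by the flow to a full-measure set $V^r$, intersect over $r=1/n$ to get $\inf_t d(\gamma_{p,v}(t),K_\xi)=0$ for almost every geodesic, and then use your separation lemma to rule out $K_\xi$ being proper. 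Both rest on the same half-space construction; your version trades Birkhoff for the extra intersection step. One minor remark: the initial reduction ``$E$ of positive measure $\Rightarrow$ full measure'' is superfluous (and the measurability of $E$ is not justified); your steps from $D^r$ onward already prove directly that $\mathscr{A}(\xi)$ is dense for almost every $\xi$, without invoking $E$ at all.
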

\begin{proof}
Soit $r > 0$ quelconque. Soit $C = \Gamma\backslash\pi^{-1}(C^r) \subset \Gamma\backslash \tilde{N}$. Par le théorème de Fubini, c'est un sous-ensemble borélien de mesure positive. En particulier, par l'ergodicité du flot géodésique, pour presque tout $\Gamma(\xi, x, v) \in \Gamma\backslash \tilde{N}$, on a
\[\limsup_{n \rightarrow \infty} \frac{1}{n}\sum_{k = 0}^{n - 1} 1_C\left(\phi_n\left(\Gamma(\xi, x, v)\right)\right) > 0.\]

Par conséquent, de nouveau par le théorème de Fubini, pour presque tout $\xi$ et presque tout $(x, v) \in T\tilde{\mathscr{F}}_\xi$ avec $-q(v) = 1$, il existe des $t$ arbitrairement grands tel que si $\gamma_{x, v}$ désigne la géodésique partant de $x$ de vecteur tangent $v$, alors $\gamma_{x, v}(t)$ est à une distance au plus $r$ de $\mathrm{Conv}(\overline{\mathscr{A}(\xi)})$, puisque $\mathrm{Conv}(\overline{\mathscr{A}(\xi)})$ est équivariant sous l'action de $\Gamma$. Ceci n'est possible que si le point du bord $\lim_{t \rightarrow \infty}\gamma_{x, v}(t) \in \partial \tilde{\mathscr{F}}_\xi$ est dans $C^r(\xi) \cap \partial \tilde{\mathscr{F}}_\xi = \overline{\mathscr{A}(\xi)}$. On en déduit le résultat.
\end{proof}

En combinant ce dernier résultat avec la proposition \ref{prop-ahlfors}, on en déduit finalement le théorème \ref{coro-ahlfors}.

\appendix
\section*{Annexe : Mesurabilité et mesure positive}
\begin{proof}[Preuve du corollaire \ref{thm-construction}]

On part de l'énoncé suivant, démontré par Koike et Uehara.
\begin{thm}[Théorème 1.6 de \cite{koike2019gluing}]Soit $(p, q)$ un couple de réels satisfaisant une condition diophantienne ; rappelons qu'on a noté $v_{(p, q)} = A_{\alpha\beta} + pA_{\beta\gamma} - qA_{\gamma\alpha}$. Alors il existe un ouvert $\Xi_{(p, q)}$ de $\mathscr{D} \cap v_{(p,q)}^\perp$ et une submersion holomorphe propre $\pi_{(p, q)} \colon \mathcal{X}_{(p, q)}\to \Xi_{(p, q)}$ telle que toute fibre est une surface K3 de Koike-Uehara dont l'application des périodes induit l'identité $\Xi_{(p,q)} \to \Xi_{(p, q)}$.
\label{koike-uehara}
\end{thm}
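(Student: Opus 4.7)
Le plan est de construire, pour $(p,q) \in \mathrm{Dioph}$ fixé, une famille holomorphe de surfaces de Koike-Uehara à $19$ paramètres complexes, puis de démontrer que l'application des périodes associée est un biholomorphisme local sur un ouvert de $\mathscr{D} \cap v_{(p,q)}^\perp$ --- la famille cherchée $\pi_{(p,q)} \colon \mathcal{X}_{(p,q)} \to \Xi_{(p,q)}$ s'obtenant alors par reparamétrage via l'inverse local de l'application des périodes. Pour la construction de la famille, on reprend les données décrites à la section \ref{section:debut} --- paramètre de recollement $t$, classe d'isomorphisme $\tau \in \mathbb{H}$ de la courbe elliptique, choix de $8 + 8$ points libres sur $C^\pm$, et paramètre de translation dans $g$ --- regroupées en un ouvert $U \subset \C^{19}$. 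Le procédé de Koike-Uehara définit au-dessus de $U$ une famille holomorphe propre $\pi \colon \mathcal{X} \to U$ : on a besoin pour cela d'une version paramétrée du théorème de linéarisation d'Arnol'd --- la condition diophantienne étant fixée par $(p,q)$, elle est préservée le long de la famille --- ainsi que d'un recollement uniforme sur les $V^\pm$.

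La construction fournit également un marquage naturel $\phi_u \colon H^2(X_u, \Z) \to \Lambda$, défini à partir d'une base géométrique de $H_2(X_u, \Z)$ variant continûment avec $u$. Par la proposition \ref{prop-classe-ahlfors}, la classe d'Ahlfors d'une HLPLI $L \subset X_u$ appartient à $H^{1,1}(X_u, \R)$ et est envoyée par $\phi_u$ sur $v_{(p,q)} = A_{\alpha\beta} + pA_{\beta\gamma} - qA_{\gamma\alpha}$. Comme la période $\mathscr{P}(X_u, \phi_u)$ est orthogonale à toute classe $(1,1)$ réelle, l'application des périodes $\mathcal{P} \colon U \to \mathscr{D}$ prend ses valeurs dans $\mathscr{D} \cap v_{(p,q)}^\perp$, qui est de dimension complexe $19$, comme $U$.

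Il reste à montrer que $\mathcal{P}$ est un biholomorphisme local en un point. Par le théorème de Torelli local pour les surfaces K3, cela revient à l'injectivité en un point de l'application de Kodaira-Spencer $\kappa \colon T_u U \to H^1(X_u, T_{X_u})$. Les images des $19$ tangents aux paramètres se calculent par la cohomologie de \v{C}ech sur le recouvrement $\{M^+, M^-, V\}$ : la dérivation en $t$ donne une classe de type « résidu » supportée dans $V$ ; celle en $\tau$ une classe liée à la déformation des courbes elliptiques ; celles aux $8 + 8$ points libres et à la translation, des classes locales près des diviseurs exceptionnels $E_i^\pm$. L'indépendance linéaire de ces $19$ classes, qui forme le cœur du calcul, peut se vérifier en les couplant avec une base adaptée de $H^1(\Omega^1_{X_u})$. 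On pose alors $\Xi_{(p,q)} := \mathcal{P}(U)$, qui est un ouvert de $\mathscr{D} \cap v_{(p,q)}^\perp$, et l'on obtient la famille cherchée.

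Le principal obstacle est ce calcul explicite des classes de Kodaira-Spencer et la vérification de leur indépendance, qui requiert une analyse soignée de la cohomologie de $T_{X_u}$ sur les trois ouverts du recouvrement. Une approche alternative consiste à combiner le théorème de Bogomolov-Tian-Todorov (lissité des déformations des K3 préservant une classe $(1,1)$) avec la vérification que la famille de Koike-Uehara est versale pour ce problème de déformation restreint, ce qui entraînerait automatiquement le caractère de biholomorphisme local de $\mathcal{P}$.
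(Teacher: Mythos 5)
Premier point : ce théorème n'est pas démontré dans l'article ; il y est cité tel quel comme le théorème 1.6 de \cite{koike2019gluing} (« On part de l'énoncé suivant, démontré par Koike et Uehara ») et sert de point de départ à la preuve du corollaire \ref{thm-construction} en annexe. Il n'y a donc pas de « preuve de l'article » à laquelle comparer la vôtre ; la comparaison pertinente est avec la démonstration de Koike-Uehara elle-même, dont l'annexe révèle d'ailleurs la nature : l'ouvert $\Xi_{(p, q)}$ y est décrit \emph{explicitement} par l'inégalité $q(\xi,\overline{\xi}) > \Lambda_{(p, q)}(\xi)$, où $\Lambda_{(p, q)}$ est une intégrale des formes $\eta^{\pm}$ hors des voisinages tubulaires maximaux. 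Autrement dit, Koike et Uehara ne passent pas par un argument de type Torelli local et Kodaira-Spencer : ils calculent directement les périodes de la surface recollée en intégrant la $2$-forme normalisée $\sigma$ sur la base géométrique de $H_2$ construite à partir des données de recollement, expriment l'application des périodes en fonction des $19$ paramètres, et déterminent exactement quels points de $\mathscr{D}\cap v_{(p,q)}^\perp$ sont atteints ; la partie analytique délicate (séries majorantes, théorème de Siegel \cite{siegel1942iteration}) sert à contrôler le rayon des voisinages d'Arnol'd.

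Second point : votre proposition est une stratégie plausible mais ce n'est pas une preuve, car elle reporte précisément les deux étapes qui constituent le cœur du travail. D'une part, l'existence d'une famille holomorphe propre au-dessus de $U\subset\C^{19}$ exige une version à paramètres du théorème d'Arnol'd avec un contrôle uniforme du rayon de linéarisation le long de la famille — ce n'est pas automatique même à $(p, q)$ fixé, et c'est exactement ce que Koike-Uehara quantifient via Siegel. D'autre part, l'injectivité de l'application de Kodaira-Spencer, que vous identifiez vous-même comme « le principal obstacle », est laissée en exercice ; sans elle, rien ne garantit que $\mathcal{P}(U)$ soit ouvert dans $\mathscr{D}\cap v_{(p,q)}^\perp$ (une application holomorphe entre variétés de même dimension peut avoir une image d'intérieur vide). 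Notez aussi qu'un biholomorphisme local « en un point » ne donne l'ouverture de l'image qu'au voisinage de ce point — ce qui suffirait pour l'énoncé, mais mérite d'être dit — et que l'alternative via Bogomolov-Tian-Todorov que vous évoquez bute sur le même écueil, puisque la versalité de la famille revient au même calcul.
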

Étant donné $a, b \in \C$, notons $[a, b]$ la matrice de la famille $(a, b)$ dans la base $(1, i)$ de l'espace vectoriel réel $\C$. 

Soit \[U := \left\{\C\xi \in \mathscr{D}\,:\,b_{\gamma}(\xi)\neq 0\text{ et } \left[\frac{b_{\alpha}(\xi)}{b_{\gamma}(\xi)}, -\frac{b_{\beta}(\xi)}{b_{\gamma}(\xi)} \right] \text{ est inversible}\right\}.\]
Par un calcul que l'on ne détaille pas ici, on peut vérifier que c'est un ouvert de $\mathscr{D}$ tel que $\mathscr{D}\setminus U$ est de mesure nulle (c'est en fait une courbe réelle).
Soit maintenant $V \colon U \to \R^2$ l'application
\[\C\xi \mapsto \left[\frac{b_{\alpha}(\xi)}{b_{\gamma}(\xi)}, -\frac{b_{\beta}(\xi)}{b_{\gamma}(\xi)} \right]^{-1}\begin{pmatrix}-1 \\ 0\end{pmatrix}.\]
Par définition, si $\xi \in U \cap v_{(p, q)}^{\perp}$, on a $V(\C\xi) = (p, q)$. De même, on peut vérifier que c'est une submersion.

Posons $U' = U \cap \bigcup_{(p, q) \in \mathrm{Dioph}} \Xi_{(p, q)}$. Étant donné $\xi \in U'$, soit $(p, q) = V(\xi)$. On peut considérer les surfaces $S^{\pm}(\xi)$, les courbes $C^{\pm}(\xi)$ et $\eta^{\pm}(\xi)$ les $2$-formes méromorphes sur $S^{\pm}(\xi)$, comme dans la construction de Koike-Uehara, associées à la période $\xi$, dans la famille $\pi_{(p, q)}$ du théorème. Notons de plus $W^{-}_{\text{max}}(\xi)$ le voisinage tubulaire holomorphe de $C^{\pm}(\xi)$ maximal dans $S^{\pm}(\xi)$ (voir \cite[lemme 2.7]{koike2019gluing}). Alors on peut prendre dans l'énoncé du théorème \ref{koike-uehara}, pour tout $(p, q) \in \R^2$ satisfaisant une condition diophantienne \cite[théorèmes 1.6, 6.2 et 6.4]{koike2019gluing}

\[\Xi_{(p, q)} =\left\{\xi \in v_{(p, q)}^\perp\,:\,b_\beta(\xi) \neq 0,\,\mathrm{Im}\left(\frac{b_{\alpha}(\xi)}{b_{\beta}(\xi)}\right) >0, q(\xi,\overline{\xi}) > \Lambda_{(p, q)}(\xi)\right\},\]
où
\[\Lambda_{(p, q)}(\xi)  := \int_{S^{+}(\xi)\setminus W^{+}_{\text{max}}(\xi)} \eta^{+}(\xi)\wedge\overline{\eta^{-}(\xi)} + \int_{S^{-}(\xi)\setminus W^{-}_{\text{max}}(\xi)} \eta^{-}(\xi)\wedge\overline{\eta^{-}(\xi)}.\]
La preuve du théorème \ref{koike-uehara} repose elle-même sur celle du théorème 8.4 de Koike-Uehara, fondée sur la construction de coordonnées qui donnent le voisinage recherché par l'usage de séries majorantes, dont il s'agit de montrer que le rayon de convergence est strictement positif. Ces opérations peuvent toutes facilement être quantifiées, la seule difficulté étant l'usage d'un théorème de Siegel \cite{siegel1942iteration}. Mais celui-ci donne lui aussi un rayon explicite en fonction de la condition diophantienne et de bornes sur les différentes fonctions en jeu. Cela entraîne que si l'on se restreint à $V^{-1}(\text{Dioph}(C, \alpha))$, la fonction $\xi \mapsto \Lambda_{V(\xi)}(\xi)$ est semi-continue supérieurement.
L'ensemble $U' \cap V^{-1}(\text{Dioph}(C, \alpha))$ est alors un ouvert de $V^{-1}(\mathrm{Dioph}(C, \alpha))$, donc en faisant une réunion dénombrables, on en déduit que $U'$ est un borélien.
\end{proof}

\begin{proof}[Démonstration du lemme \ref{lem-mes-conv}]
Soit $r > 0$. Soit $D$ un ensemble dénombrable de formes linéaires dense dans $(\Lambda\otimes\R)^{*}$. Pour tout $\ell \in D$ et tout $\xi \in \mathscr{D}$, définissons un ensemble $C_\ell(\xi)$ de la façon suivante :
\begin{itemize}
\item si $\ell$ délimite deux demi-espaces ouverts dans $\Hp_\xi$, dont l'un, noté $E$, contient $\mathscr{A}(\xi)$, alors
on pose $C_\ell(\xi) := E$ ;
\item sinon, on pose $C_\ell(\xi) := \Hp_\xi$.
\end{itemize}
Posons enfin $C_\ell^r(\xi) := \{x\in\Hp_\xi\,;\,d(x, C_\ell(\xi)) \leq r\}$ et $C^r(\xi) := \bigcap_{\ell \in D} C_\ell^r(\xi)$. L'ensemble $C^r(\xi)$ est l'ensemble des points à distance inférieure ou égale à $r$ de l'enveloppe convexe de l'adhérence de $\mathscr{A}(\xi)$.

L'ensemble
\[C_\ell^r := \{(\xi, x) \in M\,:\,x \in C_\ell^r(\xi)\}\]
est un borélien. En effet, supposons que $\ell$ délimite deux demi-espaces ouverts dans $\tilde{\mathscr{F}}_\xi$, et soit $E$ l'un des deux. La propriété que $E$ contient $\mathscr{A}(\xi)$ s'écrit :
\[\text{pour tout }\gamma \in \Gamma\text{ tel que }\gamma \xi \in A, a(\gamma\xi) \in \gamma E.\]

L'ensemble $\bigcap_{\ell \in D} C^r_\ell$ est aussi un borélien. Or
\[\bigcap_{\ell \in D} C^r_\ell = \{(\xi, x)\,:\,x \in C^r(\xi)\} = \{(\xi, x)\,:\,d(x, \mathrm{Conv}(\overline{\mathscr{A}(\xi))}) \leq r\} = C^r,\] d'où la première partie de l'énoncé.

Pour la seconde partie, remarquons que pour tout $\xi \in \mathscr{D}$, $C^r(\xi)$ est de mesure strictement positive dès que $\mathscr{A}(\xi)$ est de cardinal au moins $2$. Comme ceci est vrai pour presque tout $\xi$ par la proposition \ref{infinite-hlpl}, et que $M$ est fibré par les $\tilde{\mathscr{F}}_\xi$, on en déduit par le théorème de Fubini que $C^r$ est de mesure strictement positive.
\end{proof}

\bibliographystyle{plain}
\bibliography{HLPLfinal}
\end{document}